\documentclass[letter,11pt]{amsart}

\usepackage{fullpage}			
\usepackage[english]{babel}
\usepackage[utf8]{inputenc}		
\usepackage[T1]{fontenc}		
\usepackage{amsmath}			
\usepackage{amssymb}			
\usepackage{amsthm}			
\usepackage{mathrsfs}			
\usepackage[pdftex,bookmarksnumbered]{hyperref}	
\usepackage{eucal}
\usepackage{amsfonts}


\usepackage[numbers]{natbib}

\hyphenation{fe-cha-do res-tri-ta e-xis-tem con-ver-gen-tes con-ver-gen-te e-qui-va-len-tes li-mi-ta-do li-mi-ta-da sub-es-pa-ço sub-es-pa-ços de-mons-tra-ção de-mons-tra-ções ba-lan-ce-a-da ba-lan-ce-a-do de-mons-tra-do te-nha-mos to-po-ló-gi-co re-pa-ra-me-tri-za-ções re-pa-ra-me-tri-za-ção pa-ra-me-tri-za-das to-po-ló-gi-cos to-po-ló-gi-ca fun-çõe fun-ção so-lu-ções}

\newcounter{generalnumbering} \numberwithin{generalnumbering}{section}

\theoremstyle{plain}		\newtheorem{theorem}[generalnumbering]{Theorem}
\theoremstyle{plain}		\newtheorem{corollary}[generalnumbering]{Corollary}
\theoremstyle{definition}		\newtheorem{definition}[generalnumbering]{Definition}
\theoremstyle{definition}		
\theoremstyle{plain}		\newtheorem{proposition}[generalnumbering]{Proposition}
\theoremstyle{plain}		\newtheorem{lemma}[generalnumbering]{Lemma}

\newenvironment{remark}
{\vspace{\topsep}\noindent\textbf{Remark.}}
{\vspace{\topsep}}

\author{Luiz Cordeiro}

\address{Institute of Mathematics and Statistics,
University of Ottawa,
585 King Edward Ave.,
Ottawa, ON K1N 6N5,
Canada}
\email{lcord081@uottawa.ca}

\title{An elementary approach to sofic equivalence relations}

\subjclass[2010]%
{Primary 
37A15;
Secondary
28D15,
47D03
}

\begin{document}

\maketitle

\begin{abstract}
We present an elementary description of sofic equivalence relations, as well as some permanence properties for soficity. We answer a question by Conley, Kechris and Tucker-Drob about determining soficity in terms of its full group.
\end{abstract}

\section*{Introduction}

The notion of soficity for groups was introduced by Gromov \cite{MR1694588} in his work of symbolic dynamics. In 2010, Elek and Lippner \cite{eleklippner2010} introduced the notion of soficity for equivalence relations in the same spirit as Gromov's original definition, i.e., an equivalence relation $R$, induced by some action of the free group $\mathbb{F}_\infty$, is sofic if the Schreier graph of the $\mathbb{F}_\infty$-space $X$ can be approximated, in a suitable sense, by Schreier graphs of finite $\mathbb{F}_\infty$-spaces.

Alternative definitions by Ozawa and P\v{a}unescu describe soficity at the level of the so-called full semigroup of $R$, or in terms of the action of the full group on the measure algebra. We describe general elementary techniques to deal with (abstract) sofic relations, in particular showing that these definitions are equivalent, and use them to prove that soficity is well-behaved with respect to countable decompositions of the space, finite-index extensions and products, as well as to some operations on the measure, namely direct integrals and substituting the measure by an equivalent one (so soficity can be seen as a property of a measure-class, instead of a specific measure)

\section{Definitions and notation}

A \emph{countable Borel} equivalence relation on a standard Borel space $X$ is an equivalence relation $R$ on $X$ which is Borel as a subset of the product space $X\times X$, and for each $x\in X$, the $R$-class $R(x)=\left\{y\in X:(x,y)\in R\right\}$ is countable. The \emph{Borel full semigroup} of $R$ is the set $[[R]]_B$ of partial Borel isomorphisms $g:\operatorname{dom}g\to\operatorname{ran}g$, where $\operatorname{dom}g$ and $\operatorname{ran}g$ are Borel subsets of $X$, for which $(x,gx)\in R$ for all $x\in\operatorname{dom}g$. $[[R]]_B$ is an inverse monoid\footnote{An \emph{inverse monoid} is a set $M$ with an associative binary operation $(x,y)\mapsto xy$, which has a neutral element $1$ and such that for each element $g\in M$ there is an unique element $h\in M$ satisfying $g=ghg$ and $h=hgh$, called the \emph{inverse} of $g$ and denoted $h=g^{-1}$.} with the usual composition of partial functions, i.e., for $g,h\in[[R]]_B$,
\begin{enumerate}
\item[(i)] $\operatorname{dom}(hg)=g^{-1}(\operatorname{ran}g\cap\operatorname{dom}h)$, $\operatorname{ran}(hg)=h(\operatorname{dom}h\cap\operatorname{ran}g)$
\item[(ii)] $(hg)(x)=h(g(x))$ for all $x\in\operatorname{dom}(hg)$.
\end{enumerate}

Now let $X$ be a standard Borel space and $\mu$ a Borel probability measure on $X$, in which case we call $(X,\mu)$ a \emph{standard probability space}. Let $R$ be a countable Borel equivalence relation on $X$. We say that $\mu$ is \emph{$R$-invariant}, or that $R$ is \emph{measure($\mu$)-preserving} if $\mu(g(A))=\mu(A)$ for all $g\in[[R]]_B$ and $A\subseteq\operatorname{dom}(g)$.

If $G$ is a countable group acting by measure-preserving Borel automorphisms on a standard probability space $(X,\mu)$, then the orbit equivalence relation $R_G=\left\{(x,gx):x\in X, g\in G\right\}$ is countable, Borel and probability measure-preserving. It is standard fact \cite{MR0578656} that every countable Borel probability measure-preserving relation $R$ is of the form $R=R_G$ for a certain countable group $G$ acting on $(X,\mu)$.

Throughout this paper, we will only consider stardard probability spaces, countable Borel measure-preserving equivalence relations, measure-preserving actions and countable groups, even when no explicit mention of these hypotheses is done.

An $R$-invariant measure $\mu$ induces a pseudometric $d_\mu$ on $[[R]]_B$ by
 \[d_\mu(g,h)=\mu(\operatorname{dom}g\triangle\operatorname{dom}h)+\mu\left\{x\in\operatorname{dom}g\cap\operatorname{dom}h:gx\neq hx\right\}.\]
The metric quotient is denoted $[[R]]_\mu$, or simply $[[R]]$ when $\mu$ is implicit, and is called the (measured) \emph{full semigroup} of $R$. It is a complete, separable inverse monoid with the naturally defined structure.

The \emph{trace} of an element $g\in[[R]]$ is $\operatorname{tr}g=\mu\left\{x\in\operatorname{dom}(g):gx=x\right\}$.

\begin{proposition}\label{propositionfullsemigroupisinverse}
Given $g,h,g',h'\in[[R]]_B$,
\begin{enumerate}
\item $d_\mu(gh,g'h')\leq d_\mu(g,g')+d_\mu(h,h')$.
\item $d_\mu(g,h^{-1})\leq d_\mu(g,ghg)+d_\mu(h,hgh)$.
\item $d_\mu(g,h)=d_\mu(g^{-1},h^{-1})$.
\end{enumerate}
\end{proposition}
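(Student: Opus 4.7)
My plan is to reduce each of (1)--(3) to a set-theoretic inclusion between appropriate ``disagreement sets'' in $X$, and then convert those inclusions into measure inequalities using the crucial fact that every element of $[[R]]_B$ (and its inverse) preserves $\mu$ on its domain. For (1), I would apply the triangle inequality $d_\mu(gh,g'h')\le d_\mu(gh,g'h)+d_\mu(g'h,g'h')$ and separately verify that each composition is $1$-Lipschitz in one factor. For $d_\mu(gh,g'h)\le d_\mu(g,g')$, a direct computation gives
\[
\operatorname{dom}(gh)\triangle\operatorname{dom}(g'h)=h^{-1}\bigl((\operatorname{dom}g\triangle\operatorname{dom}g')\cap\operatorname{ran}h\bigr),
\]
and an analogous identity expresses the disagreement set $\{x\in\operatorname{dom}(gh)\cap\operatorname{dom}(g'h):gh(x)\ne g'h(x)\}$ as the $h$-preimage of a subset of $\{y\in\operatorname{dom}g\cap\operatorname{dom}g':g(y)\ne g'(y)\}$. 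Since $h^{-1}$ preserves $\mu$ on $\operatorname{ran}h$, applying $\mu$ to both sides yields the desired estimate. A symmetric argument (using that $g'$ preserves $\mu$) gives $d_\mu(g'h,g'h')\le d_\mu(h,h')$, and (1) follows.

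For (2), the key observation is that injectivity of $g$ forces the set $\{x\in\operatorname{dom}(ghg):ghg(x)=g(x)\}$ to lie in the set where $g$ agrees with $h^{-1}$. Indeed, if $ghg(x)=g(x)$, then $g(x)\in\operatorname{dom}h$, $h(g(x))\in\operatorname{dom}g$, and injectivity of $g$ forces $h(g(x))=x$; hence $x\in\operatorname{ran}h=\operatorname{dom}h^{-1}$ and $h^{-1}(x)=g(x)$. A dual argument, with $h$ playing the role of $g$, shows that points of the form $h(y)$ with $hgh(y)=h(y)$ also lie in the agreement set of $g$ and $h^{-1}$. Combining these, the ``bad set'' on which $g$ and $h^{-1}$ differ (in the combined sense appearing in the definition of $d_\mu$) is contained, modulo null sets, in the union of the bad sets for the pairs $(g,ghg)$ and $(h,hgh)$. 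The main effort is the case-by-case bookkeeping for points in $\operatorname{dom}g\setminus\operatorname{ran}h$, $\operatorname{ran}h\setminus\operatorname{dom}g$, and $\operatorname{dom}g\cap\operatorname{ran}h$, but each case reduces to the injectivity argument above, and subadditivity of $\mu$ then yields (2).

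For (3), the cleanest approach is to pass to graphs $G_g=\{(x,g(x)):x\in\operatorname{dom}g\}\subseteq R$, under which $[[R]]_B$ embeds into the Borel subsets of $R$, and the inversion $g\mapsto g^{-1}$ becomes the flip $\sigma\colon(x,y)\mapsto(y,x)$ applied to $G_g$. The $R$-invariance of $\mu$ is equivalent to the $\sigma$-invariance of the counting-along-fibers measure $\nu_R(S)=\int_X\#\{y:(x,y)\in S\}\,d\mu(x)$ on $R$. Since $G_{g^{-1}}\triangle G_{h^{-1}}=\sigma(G_g\triangle G_h)$, the $\sigma$-invariance of $\nu_R$ is what produces the equality $d_\mu(g,h)=d_\mu(g^{-1},h^{-1})$. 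The main technical hurdle will be matching the asymmetric definition of $d_\mu$ in terms of $\mu$ on $X$ with the symmetric invariant $\nu_R$ on $R$, carefully accounting for the contributions of the first- and second-coordinate projections of $G_g\triangle G_h$.
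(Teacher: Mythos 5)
The paper states this proposition without proof, so there is no argument of the author's to compare yours against; I am assessing your plan on its own merits.

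Parts (1) and (2) of your plan are sound. For (1), the identity $\operatorname{dom}(gh)\triangle\operatorname{dom}(g'h)=h^{-1}\bigl((\operatorname{dom}g\triangle\operatorname{dom}g')\cap\operatorname{ran}h\bigr)$ and the corresponding inclusion of disagreement sets do hold, and measure-preservation of $h^{-1}$ turns them into $d_\mu(gh,g'h)\le d_\mu(g,g')$; for the other factor, injectivity of $g'$ alone already places both the domain symmetric difference and the disagreement set of $(g'h,g'h')$ inside the disjoint union defining $d_\mu(h,h')$, so that half needs no measure theory. For (2), the injectivity observation is exactly the right key; the one point to make explicit when you write it up is that the piece $\operatorname{ran}h\setminus\operatorname{dom}g$ of the bad set of $(g,h^{-1})$ is covered not by the bad set of $(h,hgh)$ itself (which lives in $\operatorname{dom}h$) but by its image under $h$, so the measure inequality for that piece is where $\mu(h(S))=\mu(S)$ enters; your preamble suggests you are aware of this.

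Part (3) contains a genuine gap, and it is exactly the ``technical hurdle'' you defer. The fiber of $G_g\triangle G_h$ over $x$ has one point when $x\in\operatorname{dom}g\triangle\operatorname{dom}h$ but \emph{two} points (namely $(x,gx)$ and $(x,hx)$) when $x$ lies in the disagreement set, so $\nu_R(G_g\triangle G_h)=\mu(\operatorname{dom}g\triangle\operatorname{dom}h)+2\mu\{x:gx\ne hx\}$, which is not $d_\mu(g,h)$. Your flip-invariance argument correctly shows that this doubled quantity is invariant under $g\mapsto g^{-1}$, but it cannot be converted into invariance of $d_\mu$ as literally defined, because the latter is false: take $X=[0,1)$ with Lebesgue measure, $R$ generated by $Tx=x+\tfrac12 \bmod 1$, $g=\operatorname{id}_{[0,1/2)}$ and $h=T|_{[0,1/2)}$; then $d_\mu(g,h)=0+\tfrac12=\tfrac12$, while $\operatorname{dom}g^{-1}=[0,1/2)$ and $\operatorname{dom}h^{-1}=[1/2,1)$ are disjoint, so $d_\mu(g^{-1},h^{-1})=1$. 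What your argument actually proves is item (3) for the ``graph'' metric $\mu(\operatorname{dom}g)+\mu(\operatorname{dom}h)-2\mu\{x:gx=hx\}$, which is the metric usually used in this context, agrees with $2d_\mu$ on the full group, and is uniformly equivalent to $d_\mu$ in general ($d_\mu\le\nu_R(G_g\triangle G_h)\le 2d_\mu$). The discrepancy is thus a defect in the stated definition of $d_\mu$ rather than in your idea, but as written your proof of (3) cannot close: you should either record the counterexample and prove (3) for the symmetrized metric, or settle for the two-sided estimate $\tfrac12 d_\mu(g,h)\le d_\mu(g^{-1},h^{-1})\le 2d_\mu(g,h)$, which follows from your $\nu_R$ computation and suffices for the use made of (3) later (well-definedness of the inverse on metric quotients and ultraproducts).
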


The \emph{Borel full group} $[R]_B$ of a countable Borel equivalence relation $R$ on $X$ is the set of those $g\in[[R]]_B$ with $\operatorname{dom}g=\operatorname{ran}g=X$. If $\mu$ is an $R$-invariant probability measure, the image of $[R]_B$ in $[[R]]$ is called the (measured) \emph{full group} $R$ and is denoted $[R]$.

The \emph{measure algebra} of a standard probability space $(X,\mu)$ is the set $\operatorname{MAlg}(X,\mu)$ of Borel subsets of $X$ modulo $\mu$-null sets, i.e., we identify Borel subsets $A,B\subseteq X$ when $\mu(A\triangle B)=0$, and this is also an inverse monoid under intersection. Given a $\mu$-preserving relation $R$, we can identify $\operatorname{MAlg}(X,\mu)$ as the set of idempotent of $[[R]]$, by sending (the class of) each $A\subseteq X$ to (the class of) the identity $1_A:A\to A$ of $A$.

Given $n\in\mathbb{N}$, denote $[n]=\left\{0,\ldots,n-1\right\}$ a set with $n$ elements, and consider the normalized counting measure $\mu_{\#,n}(A)=\#A/n$ on $[n]$. When no confusion arises, we simply write $\mu_\#$. By considering the full equivalence relation $R_n=[n]^2$ on $[n]$, its full semigroup is the set $[[n]]$ of all partial bijections of $[n]$, and the full group is simply the permutation group $\mathfrak{S}_n$. The metric associated with $\mu_\#$ is denoted $d_\#$, and called the normalized \emph{Hamming distance}, and the measure algebra (which consists of subsets of $[n]$) is denoted $\operatorname{MAlg}(n)$.

The language of metric ultraproducts is useful for soficity, and we'll describe them briefly here. We refer to \cite{pk12} and \cite{MR3408561} for the details. Let $(M_k,d_k)$ be a sequence of metric spaces of diameter $\leq 1$, and $\mathcal{U}$ a free ultrafilter on $\mathbb{N}$. The \emph{metric ultraproduct} of $(M_k,d_k)$ along $\mathcal{U}$ is the metric quotient of $\prod_k M_k$ under the pseudometric $d_\mathcal{U}((x_k),(y_k))=\lim_{n\to\mathcal{U}}d_k(x_k,y_k)$, and we denote it $\prod_{\mathcal{U}}M_k$. We denote the class of a sequence $(x_k)_k\in\prod_k M_k$ by $(x_k)_\mathcal{U}$.

We will be interested in ultraproducts of the semigroups $[[n]]$, $\operatorname{MAlg}(n)$ and $\mathfrak{S}_n$. We also extend the notion of domain, range, etc... to these ultraproducts, i.e., we consider maps
\[\operatorname{dom}:\prod_{\mathcal{U}}[[n_k]]\to\prod_{\mathcal{U}}\operatorname{MAlg}(n_k),\quad\operatorname{dom}(g_n)_\mathcal{U}=(\operatorname{dom}g_n)_\mathcal{U}\]
and similarly for $\operatorname{ran},\operatorname{supp},\operatorname{Fix}:\prod_{\mathcal{U}}[[n_k]]\to\prod_{\mathcal{U}}\operatorname{MAlg}(n_k)$ (respectively, range, support, and fixed points). The \emph{trace} on $\prod_{\mathcal{U}}[[n_k]]$ is given by
\[\operatorname{tr}(g_n)_\mathcal{U}=\lim_{n\to\mathcal{U}}\operatorname{tr}(g_n).\]
Moreover, by \ref{propositionfullsemigroupisinverse} $\prod_{\mathcal{U}}[[n_k]]$ is an inverse monoid with respect to the canonical product, namely $(g_n)_\mathcal{U}(h_n)_{\mathcal{U}}=(g_nh_n)_{\mathcal{U}}$. Also, the group $\prod_{\mathcal{U}}\mathfrak{S}_{n_k}$ acts on $\prod_{\mathcal{U}}\operatorname{MAlg}(n_k)$ via $(\sigma_k)_{\mathcal{U}}\cdot (A_k)_{\mathcal{U}}=(\sigma_k(A_k))_{\mathcal{U}}$.

If $f,g\in[[R]]$ coincide on the intersection of their domains, or equivalently $f^{-1}g,fg^{-1}$ are idempotents, we denote by $f\lor g\in[[R]]$ the map with $\operatorname{dom}(f\lor g)=\operatorname{dom}(f)\cup\operatorname{dom}(g)$, and which restricts to $f$ and $g$ on their respective domains. The same can also be defined in ultraproducts.

For a given $n$, we can identify $\mathfrak{S}_n$ with the group $P_n$ of permutation matrices, or more generally $[[n]]$ with the semigroup $Q_n$ of matrices formed by $0$'s and $1'$s, with at most one $1$ in each row and each column. These semigroups are respected by tensors and direct sums, i.e., if $A\in Q_n$ and $B\in Q_m$, then $A\otimes B\in Q_{n\times m}$ and $A\oplus B\in Q_{n+m}$. We translate these operations to $[[n]]$ and $[[m]]$: Given $f\in[[n]]$ and $g\in[[m]]$, $f\otimes g\in[[n\times m]]$ is given by $(f\otimes g)(i,j)=(f(i),g(j))$ for all $i,j$ for which this makes sense, and $f\oplus g\in[[n+m]]$ is given by $(f\oplus g)(i)=f(i)$ if $i\leq n$, and $(f\oplus g)(i)=g(i-n)+n$ if $n<i\leq n+m$.

One can avoid talking about ultraproducts as follows: Let $\prod[[n]]$ be endowed with the supremum metric and define an equivalence relation $\sim$ on $\prod[[n]]$ by setting $(x_n)\sim (y_n)$ if $\lim_{n\to\infty}d_\#(x_n,y_n)=0$. Denote by $\prod^{\ell^\infty/c_0}[[n]]=\prod_n[[n]]/\sim$ the quotient. Proposition \ref{propositionfullsemigroupisinverse} also implies that $\prod^{\ell^\infty/c_0}[[n]]$ is an inverse monoid with the obvious operations.

Each $[[n]]$ embeds into $[[n+1]]$ via $f\mapsto f\oplus 1$, and this changes the metric by at most $\frac{1}{n+1}$, and also $[[n]]$ embeds isometrically into $[[kn]]$ via $f\mapsto f\otimes 1_{[k]}$. This way, we can embed $[[n]]$ into any $p\geq n$ as follows: if $p=qn+r$, with $0\leq r<n$, embed $[[n]]$ into $[[qn]]$ and then into $[[qn+1]],[[qn+2]],\ldots,[[qn+r]$. The metric changes by at most $\frac{1}{qn+r}+\cdots+\frac{1}{qn+1}\leq\frac{n}{qn}=\frac{n}{p-r}\leq\frac{n}{p-n}$, and this goes to $0$ as $p\to\infty$. With these embeddings and a couple of diagonal arguments, one easily proves the following:

\begin{theorem}
A separable metric space (semigroup) $M$ embeds into $\prod_{\mathcal{U}} [[n_k]]$ if and only if $M$ embeds into $\prod^{\ell^\infty/c_0}[[n]]$.
\end{theorem}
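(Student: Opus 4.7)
The plan is to prove both implications using a diagonal argument over a countable dense subset combined with the approximate embeddings $[[n]]\hookrightarrow[[p]]$ of distortion at most $n/(p-n)\to 0$ constructed just above the statement.

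For the direction $\prod^{\ell^\infty/c_0}\Rightarrow\prod_{\mathcal{U}}$, I would fix an isometric (and, in the semigroup case, multiplicative) embedding $\phi\colon M\to\prod^{\ell^\infty/c_0}[[n]]$, a countable dense subset $D=\{m_i\}\subseteq M$ closed under the relevant products, and lifts $(x^i_n)_n\in\prod_n[[n]]$ of the $\phi(m_i)$. Since the supremum-metric quotient by $\sim$ measures distance by $\limsup$, one has $d_M(m_i,m_j)=\limsup_n d_\#(x^i_n,x^j_n)$ and $\lim_n d_\#(x^i_nx^j_n,x^l_n)=0$ whenever $m_im_j=m_l$. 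A standard diagonal argument over these countably many conditions yields a subsequence $n_k\uparrow\infty$ along which each $\limsup$ becomes a genuine limit and each product relation still has limit $0$. For any free ultrafilter $\mathcal{U}$ on $\mathbb{N}$, the assignment $m_i\mapsto(x^i_{n_k})_{\mathcal{U}}$ is then an isometric (semi)group homomorphism on $D$, which extends by uniform continuity to an embedding $M\hookrightarrow\prod_{\mathcal{U}}[[n_k]]$.

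For the direction $\prod_{\mathcal{U}}\Rightarrow\prod^{\ell^\infty/c_0}$, I would start with $\phi\colon M\to\prod_{\mathcal{U}}[[n_k]]$ and representatives $(g^i_k)$ on a dense $D$ as above. For each pair $(i,j)$, each product relation $m_im_j=m_l$, and each $\varepsilon>0$, the sets $\{k:|d_\#(g^i_k,g^j_k)-d_M(m_i,m_j)|<\varepsilon\}$ and $\{k:d_\#(g^i_kg^j_k,g^l_k)<\varepsilon\}$ lie in $\mathcal{U}$; diagonalising over these countably many conditions with $\varepsilon=1/l$ produces a subsequence $k_l$ such that $\lim_l d_\#(g^i_{k_l},g^j_{k_l})=d_M(m_i,m_j)$ and $\lim_l d_\#(g^i_{k_l}g^j_{k_l},g^l_{k_l})=0$. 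Writing $N_l=n_{k_l}$, I would pick a strictly increasing sequence $P_1<P_2<\cdots$ with each $P_l$ large enough that $N_l/(P_l-N_l)<1/l$; for $p\geq P_1$ let $l(p)$ be the unique index with $p\in[P_{l(p)},P_{l(p)+1})$, and define $h^i_p\in[[p]]$ as the image of $g^i_{k_{l(p)}}\in[[N_{l(p)}]]$ under the approximate embedding. Since $l(p)\to\infty$ the distortion $1/l(p)$ vanishes, so $\lim_p d_\#(h^i_p,h^j_p)=d_M(m_i,m_j)$ and $\lim_p d_\#(h^i_ph^j_p,h^l_p)=0$, which gives an isometric (semi)group embedding of $D$, and hence of $M$, into $\prod^{\ell^\infty/c_0}[[n]]$.

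The main obstacle is the bookkeeping in the second direction: the sequence $(n_k)$ can be arbitrarily sparse in $\mathbb{N}$, so the thresholds $P_l$ must be chosen to drive the distortion of the approximate embedding $[[N_l]]\hookrightarrow[[p]]$ to $0$ uniformly across all the pairs and product relations being tracked. Once these thresholds are in place, both the metric and multiplicative conditions reduce to single sequences tending to $0$, which pass transparently through the $\limsup$ defining the $\ell^\infty/c_0$-metric.
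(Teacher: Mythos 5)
Your second direction ($\prod_{\mathcal{U}}[[n_k]]\Rightarrow\prod^{\ell^\infty/c_0}[[n]]$) is correct and is exactly the argument the paper has in mind: since ultralimits give two-sided approximation, each set $\{k:|d_\#(g^i_k,g^j_k)-d_M(m_i,m_j)|<\varepsilon\}$ really does lie in $\mathcal{U}$, the diagonal extraction produces genuine limits, and your thresholds $P_l$ together with the approximate embeddings $[[N_l]]\hookrightarrow[[p]]$ of distortion $N_l/(p-N_l)$ are the right way to fill in the indices between consecutive $N_l$'s.

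The other direction has a genuine gap at the sentence ``a standard diagonal argument over these countably many conditions yields a subsequence $n_k\uparrow\infty$ along which each $\limsup$ becomes a genuine limit.'' A diagonal argument can force each bounded sequence $d_\#(x^i_n,x^j_n)$ to \emph{converge} along a common subsequence, but only to some value in $[\liminf,\limsup]$; it cannot force the limit to equal the $\limsup$ for all pairs simultaneously, because the subsequences along which different pairs attain their respective $\limsup$'s may be disjoint. Concretely, take $x^1_n=x^2_n$ for odd $n$ and $d_\#(x^1_n,x^2_n)\approx 1$ for even $n$, while $x^3_n=x^4_n$ for even $n$ and $d_\#(x^3_n,x^4_n)\approx 1$ for odd $n$: every subsequence misses at least one of the two $\limsup$'s, so the resulting map $m_i\mapsto(x^i_{n_k})_{\mathcal{U}}$ is only $1$-Lipschitz, not isometric. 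Moreover, the obstruction appears to be essential rather than a defect of the method: the metric on any finite subset of any $[[N]]$ is an average over $x\in[N]$ of $\{0,1\}$-valued pseudometrics coming from partitions (two elements are ``together at $x$'' iff they agree at $x$ or both omit $x$ from their domains), and all such averages, hence all metrics of finite subsets of any $\prod_{\mathcal{U}}[[n_k]]$, satisfy the pentagonal inequality; but by alternating two suitable configurations along even and odd $n$ one realizes the five-point space $\tfrac12 d_{K_{2,3}}$ isometrically inside $\prod^{\ell^\infty/c_0}[[n]]$, and this metric violates the pentagonal inequality. So this direction cannot be proved for arbitrary separable metric spaces by any rearrangement of representatives; it requires either restricting $M$ (e.g.\ to sub-semigroups of some $[[R]]$, where one argues through finite $(K,\epsilon)$-approximations rather than through the given coordinates) or weakening what ``embeds'' means. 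At minimum, your write-up needs to confront how the target distances are to be recovered at single coordinates, which is precisely where the difficulty lives.
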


In particular, the choice of free ultrafilter $\mathcal{U}$ or of sequence $(n_k)$ does not matter for the existence of an embedding into $\prod_{\mathcal{U}}[[n_k]]$.

\section{Sofic equivalence relations}

We use a description of soficity by Ozawa.

\begin{definition}[{\cite{ozawasoficnotes}; \cite{MR3035288}}]
$R$ is \emph{sofic} if for each finite subset $K\subseteq[[R]]$ and each $\epsilon>0$, there exists $N\in\mathbb{N}$ and $\pi:[[R]]\to[[N]]$ satisfying:
\begin{enumerate}
\item[(i)] $\pi(\operatorname{id}_X)=1_{[N]}$; $\pi(\varnothing)=\varnothing$;
\item[(ii)] For all $\varphi,\psi\in K$, $d_\#(\pi(\varphi\psi),\pi(\varphi)\pi(\psi))<\epsilon$;
\item[(iii)] For all $\varphi\in K$, $|\mu(\left\{x:\varphi(x)=x\right\})-\mu_\#(\left\{m:\pi(\varphi)(m)=m\right\})|<\epsilon$.
\end{enumerate}
$\pi$ is called a \emph{$(K,\epsilon)$-almost morphism}.
\end{definition}

It is standard procedure to write this in terms of ultraproducts. In fact, condition (i) above is unnecessary.

\begin{theorem}
$R$ is sofic if and only if $[[R]]$ embeds isometrically in $\prod_{\mathcal{U}}[[n_k]]$. In fact, an embedding $\Phi:M\to\prod_{\mathcal{U}}[[n_k]]$ from any sub-inverse semigroup $M$ of $[[R]]$ containing $1_X$ is isometric if and only if it preserves the trace.
\end{theorem}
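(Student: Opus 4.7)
The key observation is that $d_\mu$ can be rewritten purely in terms of the trace and the inverse-monoid operations. For $g,h\in [[R]]$,
\[d_\mu(g,h) = \operatorname{tr}(g^{-1}g) + \operatorname{tr}(h^{-1}h) - \operatorname{tr}(g^{-1}g\cdot h^{-1}h) - \operatorname{tr}(h^{-1}g),\]
and in particular $\operatorname{tr}(g) = 1 - d_\mu(g,1_X)$. The formula comes from three identifications: $\operatorname{tr}(g^{-1}g)=\mu(\operatorname{dom}g)$ since $g^{-1}g=1_{\operatorname{dom}g}$; $\operatorname{tr}(g^{-1}g\cdot h^{-1}h)=\mu(\operatorname{dom}g\cap\operatorname{dom}h)$ since this product equals $1_{\operatorname{dom}g\cap\operatorname{dom}h}$; and $\operatorname{tr}(h^{-1}g)=\mu\{x\in\operatorname{dom}g\cap\operatorname{dom}h:gx=hx\}$ by unwinding the definition of composition. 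Substituting into $d_\mu(g,h)=\mu(\operatorname{dom}g\triangle\operatorname{dom}h)+\mu\{x\in\operatorname{dom}g\cap\operatorname{dom}h:gx\ne hx\}$, together with $\mu(A\triangle B)=\mu(A)+\mu(B)-2\mu(A\cap B)$, yields the formula. The same derivation gives the identical identity on each $[[n_k]]$, and, passing to limits along $\mathcal{U}$, in $\prod_\mathcal{U}[[n_k]]$.

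With this in hand, the ``in fact'' statement is essentially immediate. Let $M\subseteq[[R]]$ contain $1_X$ and $\Phi:M\to\prod_\mathcal{U}[[n_k]]$ be an embedding of inverse monoids (so in particular $\Phi(1_X)=1$). Since inverses in an inverse semigroup are uniquely determined by the semigroup structure, $\Phi(g^{-1})=\Phi(g)^{-1}$ automatically, and applying the formula in the ultraproduct expresses $d_\mathcal{U}(\Phi(g),\Phi(h))$ as the same combination of traces of semigroup products of $\Phi(g),\Phi(h)$ and their inverses. Hence $\Phi$ is isometric iff it preserves the trace: trace preservation plus the formula gives isometry, and conversely, isometry combined with $\operatorname{tr}(g)=1-d_\mu(g,1_X)$ and $\Phi(1_X)=1$ gives $\operatorname{tr}(\Phi(g))=1-d_\mathcal{U}(\Phi(g),1)=\operatorname{tr}(g)$. (Trace preservation by itself already forces $\Phi(1_X)=1$, since the image would then be an idempotent of trace $1$ in the ultraproduct, which must be the identity.)

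The main equivalence is now mostly bookkeeping. For ``sofic $\Rightarrow$ embedding'', use separability of $[[R]]$ to fix a countable dense sub-inverse monoid $D\subseteq[[R]]$ containing $1_X$ and $\varnothing$; enumerate $D=\{g_1,g_2,\ldots\}$, let $K_n=\{g_1,\ldots,g_n\}$, $\epsilon_n=1/n$, and choose a $(K_n,\epsilon_n)$-almost morphism $\pi_n:[[R]]\to[[N_n]]$. Then $\Phi_0(g):=(\pi_n(g))_\mathcal{U}$ is well-defined on $D$, is multiplicative on $D$ by (ii), sends $1_X$ to $1$ by (i), and preserves the trace by (iii); by the formula it is isometric on $D$. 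Since ultraproducts of bounded metric spaces are complete, $\Phi_0$ extends uniquely to an isometric $\Phi:[[R]]\to\prod_\mathcal{U}[[n_k]]$, which is a semigroup homomorphism by continuity of the product (Proposition \ref{propositionfullsemigroupisinverse}(1)). Conversely, given an isometric embedding $\Phi$ with representatives $(\pi_k(g))_k$ of $\Phi(g)$, for any finite $K\subseteq[[R]]$ and $\epsilon>0$ the ultraproduct identities $\lim_\mathcal{U} d_\#(\pi_k(\varphi\psi),\pi_k(\varphi)\pi_k(\psi))=0$ and $\lim_\mathcal{U}\operatorname{tr}(\pi_k(\varphi))=\operatorname{tr}(\varphi)$ hold for all $\varphi,\psi\in K$, so for $\mathcal{U}$-almost every $k$ both (ii) and (iii) hold simultaneously; pick any such $k=N$ and enforce (i) by overwriting $\pi_N(\operatorname{id}_X):=1_{[N]}$ and $\pi_N(\varnothing):=\varnothing$, which preserves (ii) and (iii). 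The only real content is the trace-distance formula; everything else is routine ultraproduct and density manipulation.
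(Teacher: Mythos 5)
Your proof is correct and follows essentially the same route as the paper's (which is only sketched): the same trace--distance interconversion formula (your $\operatorname{tr}(g^{-1}g)$ is the paper's $\operatorname{tr}(1_{\operatorname{dom}g})$), followed by the standard density/diagonal and ultraproduct-representative arguments for the two directions of the main equivalence. No gaps worth noting.
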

\begin{proof}[Sketch of proof]
The second assertion follows if we write the distance in terms of the trace and vice versa. First one verifies that if $\Phi$ is isometric then $\Phi(1_X)=1$, and then that
\[\operatorname{tr}(f)=1-d_\mu(1_{\operatorname{dom}(f)},1_X)-d_\mu(1_{\operatorname{dom}(f)},f).\]
Conversely,
\[d_\mu(f,g)=\operatorname{tr}(1_{\operatorname{dom}(f)})+\operatorname{tr}(1_{\operatorname{dom}(g)})-\operatorname{tr}(1_{\operatorname{dom}(f)}1_{\operatorname{dom}(g)})-\operatorname{tr}(f^{-1}g1_{\operatorname{dom}(f)}),\]
and analogous formulas hold in $\prod_{\mathcal{U}}[[n_k]]$.

For the first part, the definition of soficity allows us to isometrically embed a dense countable inverse semigroup of $[[R]]$ in $\prod_{\mathcal{U}}[[n_k]]$, and this extends to an embedding of $[[R]]$.\qedhere
\end{proof}

\begin{remark}\label{increasingunionofsoficequivalencerelations}
If $\left\{R_n\right\}_n$ is an increasing sequence of sofic equivalence relations, then $R=\bigcup_{n=1}^\infty R_n$ is also sofic. Indeed, $\left\{[[R_n]]\right\}_n$ is an increasing sequence of semigroups of $[[R]]$ with dense union, so almost morphisms of each $[[R_n]]$ give us the necessary almost morphisms of $[[R]]$.
\end{remark}

Next, we describe soficity in terms of the natural action of $[R]$ on $\operatorname{MAlg}(X,\mu)$. If $G$ and $H$ are groups acting on sets $X$ and $Y$, respectively, $\theta:G\to H$ is a homomorphism and $\phi:X\to Y$ is a function, we say that the pair $(\theta,\phi)$ is \emph{covariant} if it respects the respective group actions, i.e., if $\phi(g(x))=\theta(g)(\phi(x))$ for all $g\in G$ and $x\in X$.

\begin{lemma}\label{lemmacovariantembeddings}
Suppose $\phi:\operatorname{MAlg}(X,\mu)\to\prod_{\mathcal{U}}\operatorname{MAlg}(n_k)$ is an isometric embedding, $g\in[R]$ and $\sigma\in\prod_{\mathcal{U}}\mathfrak{S}_{n_k}$ satisfy $\sigma\cdot \phi(A)=\phi(g(A))$ for all $A\in\operatorname{MAlg}(X,\mu)$. Then $\operatorname{tr}(\sigma1_{\phi(A)})\leq\operatorname{tr}(g1_A)$ for all $A\in\operatorname{MAlg}(X,\mu)$. If $\operatorname{tr}(g)=\operatorname{tr}(\sigma)$ then we have equality.
\end{lemma}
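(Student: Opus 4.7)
The plan is to rewrite both traces as measures of fixed-point sets and reduce everything to the inclusion $\operatorname{Fix}(\sigma)\subseteq\phi(F)$, where $F=\operatorname{Fix}(g)$. Since $g\in[R]$ is everywhere defined, $\operatorname{tr}(g1_A)=\mu(A\cap F)$, while $\operatorname{tr}(\sigma 1_{\phi(A)})$ is the ultraproduct-measure of $\phi(A)\cap\operatorname{Fix}(\sigma)$. As a preliminary, I would observe that an isometric embedding $\phi$ of measure algebras is automatically a Boolean-algebra morphism: the covariance forces $\phi(\varnothing)$ to be $\sigma$-invariant and to be the complement of $\phi(X)$ in the ultraproduct, so after the harmless replacement $\phi(A)\leftarrow\phi(A)\triangle\phi(\varnothing)$ one has $\phi(\varnothing)=\varnothing$ and $\phi(X)=1$, hence $\mu(\phi(A))=\mu(A)$; then from $\mu(A\cap B)=\tfrac12(\mu(A)+\mu(B)-\mu(A\triangle B))$ a short computation shows $\phi(A\cap B)=\phi(A)\cap\phi(B)$ in $\operatorname{MAlg}$.

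The central step is to show $\operatorname{Fix}(\sigma)\cap\phi(E)=\varnothing$, where $E=X\setminus F$. On $E$, $g$ is a fixed-point-free Borel automorphism, so the Borel graph with edge-set $\{\{x,g(x)\}:x\in E\}$ has maximum degree at most $2$, and by Kechris--Solecki--Todor\v{c}evi\'c its Borel chromatic number is at most $3$. This yields a Borel partition $E=\bigsqcup_{n}E_n$ with $g(E_n)\cap E_n=\varnothing$ for each $n$. The covariance together with the intersection-preservation of $\phi$ gives
\[\sigma\cdot\phi(E_n)\cap\phi(E_n)=\phi(g(E_n))\cap\phi(E_n)=\phi(g(E_n)\cap E_n)=\varnothing,\]
and any $\sigma$-fixed point in $\phi(E_n)$ would belong to $\sigma\cdot\phi(E_n)\cap\phi(E_n)=\varnothing$; hence $\operatorname{Fix}(\sigma)\cap\phi(E_n)=\varnothing$, and the countable union gives $\operatorname{Fix}(\sigma)\subseteq\phi(F)$.

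Intersecting with $\phi(A)$ and using $\phi(A)\cap\phi(F)=\phi(A\cap F)$ yields $\phi(A)\cap\operatorname{Fix}(\sigma)\subseteq\phi(A\cap F)$, and taking measures gives $\operatorname{tr}(\sigma 1_{\phi(A)})\leq\mu(A\cap F)=\operatorname{tr}(g1_A)$. For the equality clause, $\operatorname{tr}(\sigma)=\operatorname{tr}(g)$ reads $\mu(\operatorname{Fix}(\sigma))=\mu(\phi(F))$, which together with the containment $\operatorname{Fix}(\sigma)\subseteq\phi(F)$ forces $\operatorname{Fix}(\sigma)=\phi(F)$ in $\operatorname{MAlg}$; the inclusion above is then an equality for every $A$, and taking measures yields equality of traces. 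The main obstacle is the reduction to $\operatorname{Fix}(\sigma)\subseteq\phi(F)$, which rests on the Borel partition of $E$; everything else is formal once $\phi$ is known to preserve intersections and traces are translated as indicated.
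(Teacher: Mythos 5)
Your proof is correct, and it runs on the same engine as the paper's: partition $\operatorname{supp}(g)$ into Borel pieces that $g$ moves entirely off themselves, push them through $\phi$ using covariance and intersection-preservation, and conclude that $\operatorname{Fix}(\sigma)$ misses $\phi(\operatorname{supp}(g))$. The genuine difference is how the partition is produced and how the estimate is closed. The paper takes, for each $\epsilon>0$, a finite partition of $A\cap\operatorname{supp}(g)$ into sets $B_1,\dots,B_n$ with $g(B_i)\cap B_i=\varnothing$ plus a leftover of measure $<\epsilon$, and lets $\epsilon\to 0$; you instead invoke the Kechris--Solecki--Todor\v{c}evi\'c bound on Borel chromatic numbers of bounded-degree graphs to get an exact partition of all of $\operatorname{supp}(g)$ into three $g$-independent sets. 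This removes the limiting argument, yields the cleaner and reusable containment $\operatorname{Fix}(\sigma)\subseteq\phi(\operatorname{Fix}(g))$, and makes the equality clause immediate from $\mu_\#(\operatorname{Fix}\sigma)=\mu_\#(\phi(\operatorname{Fix}g))$ (the paper instead applies the inequality to $A$ and $X\setminus A$ and adds). You also make explicit something the paper uses silently, namely that an isometric, measure-preserving, $\varnothing$-preserving map of measure algebras automatically preserves order and intersections.

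One caveat on your preliminary step: the ``harmless replacement'' $\phi(A)\leftarrow\phi(A)\triangle\phi(\varnothing)$ is not harmless for the conclusion as stated, because it changes the set $\phi(A)$ appearing in $\operatorname{tr}(\sigma 1_{\phi(A)})$ by $\phi(\varnothing)$, which may meet $\operatorname{Fix}(\sigma)$. In fact, for a bare isometric map the inequality can fail outright: composing an honest embedding with complementation is still isometric and (for $g=\sigma=1$) covariant, yet reverses the inequality. So this step should be read not as a reduction but as recording the standing convention that an ``embedding'' of measure algebras sends $\varnothing$ to $\varnothing$; granting that (as the paper implicitly does), the rest of your argument is complete.
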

\begin{proof}
Let  $A\in\operatorname{MAlg}(X,\mu)$. Given $\epsilon>0$, we can take a finite partition $\left\{B_1,\ldots, B_{n+1}\right\}$ of $A\cap\operatorname{supp}(g)$ for which $\mu(B_{n+1})<\epsilon$ and $g(B_i)\cap B_i=\varnothing$ for $1\leq i\leq n$. We then have $\sigma\cdot\phi(B_i)\cap\phi(B_i)=\varnothing$, so
\[\mu_\#(\phi(A)\cap\operatorname{supp}(\sigma))\geq\sum_{i=1}^n\mu_\#(B_i)>\mu(A\cap\operatorname{supp}(g))-\epsilon,\]
or equivalently $\operatorname{tr}(\sigma1_{\phi(A)})<\operatorname{tr}(g1_A)+\epsilon$. Letting $\epsilon\to 0$ gives us the desired inequality.

For the last assertion, apply the first part to $A$ and $X\setminus A$.\qedhere
\end{proof}

If $G$ is a countable group acting on $(X,\mu)$ and inducing a relation $R$, we identify each element of $G$ with its image in $[R]$. The trace of an element $g\in G$ is then $\operatorname{tr}(g)=\mu\left\{x\in X:gx=x\right\}$.

\begin{proposition}\label{propositionequivalencesofdefinitionofsoficity}
Let $R$ be a countable, Borel, probability measure-preserving equivalence relation on the standard probability space $(X,\mu)$. Let $G$ be a countable group acting on $X$ and inducing $R$. The following are equivalent:
\begin{enumerate}
\item[(1)] $R$ is sofic;
\item[(2)] There exist isometric embeddings $\theta:[R]\to\prod_{\mathcal{U}}\mathfrak{S}_{n_k}$ and $\phi:\operatorname{MAlg}(X,\mu)\to\prod_{\mathcal{U}}\operatorname{MAlg}(n_k)$ which form a covariant pair.
\item[(3)] There exist a trace-preserving homomorphism $\theta:G\to\prod_{\mathcal{U}}\mathfrak{S}_{n_k}$ and an isometric embedding $\phi:\operatorname{MAlg}(X,\mu)\to\prod_{\mathcal{U}}\operatorname{MAlg}(n_k)$ which form a covariant pair.
\end{enumerate}
Moreover, if $G$ acts freely ($\mu$-a.e.) on $X$, then $\theta$ in item 3.\ does not need to be trace-preserving in principle.
\end{proposition}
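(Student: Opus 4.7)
The plan is to prove $(1)\Rightarrow(2)\Rightarrow(3)\Rightarrow(1)$ and then deduce the ``moreover'' clause from Lemma \ref{lemmacovariantembeddings}.

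For $(1)\Rightarrow(2)$, I would apply the preceding theorem to obtain an isometric embedding $\Phi:[[R]]\to\prod_{\mathcal{U}}[[n_k]]$. As $\Phi$ is a semigroup homomorphism, its restriction to idempotents is an isometric $\phi:\operatorname{MAlg}(X,\mu)\to\prod_{\mathcal{U}}\operatorname{MAlg}(n_k)$. For $g\in[R]$ one has $\Phi(g)\Phi(g)^{-1}=\Phi(1_X)=1$, so any representing sequence of $\Phi(g)$ has domains of measure $1-o(1)$; extending each representative arbitrarily to a genuine permutation yields $\theta(g)\in\prod_{\mathcal{U}}\mathfrak{S}_{n_k}$. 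Covariance follows by applying $\Phi$ to the identity $g\,1_A\,g^{-1}=1_{g(A)}$ in $[[R]]$. The implication $(2)\Rightarrow(3)$ is immediate: compose $\theta$ with the canonical map $G\to[R]$, and note that isometric embeddings preserve the trace via the formula used in the proof of the previous theorem.

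The main content is $(3)\Rightarrow(1)$. Since $G$ induces $R$, every $f\in[[R]]$ admits a decomposition $f=\bigsqcup_{i\in\mathbb{N}}g_i\,1_{A_i}$ with $g_i\in G$ and $\{A_i\}$ a Borel partition of $\operatorname{dom}(f)$. Given $\theta$ and $\phi$ as in (3), I would define
\[\Phi(f)=\bigvee_{i}\theta(g_i)\,1_{\phi(A_i)},\]
interpreting the countable join as the $d_{\mathcal{U}}$-limit of the Cauchy sequence of finite partial joins, using completeness of the ultraproduct. An isometric MAlg-embedding preserves intersections, so the $\phi(A_i)$ are pairwise disjoint; by covariance the ranges $\theta(g_i)\phi(A_i)=\phi(g_i(A_i))=\phi(f(A_i))$ are pairwise disjoint too, so the joinands are compatible. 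Well-definedness of $\Phi$ reduces, upon passing to a common refinement of two decompositions, to showing that whenever $k\in G$ acts trivially on $A\in\operatorname{MAlg}(X,\mu)$, we have $\theta(k)1_{\phi(A)}=1_{\phi(A)}$. Since $\theta$ is trace-preserving, Lemma \ref{lemmacovariantembeddings} applies with equality to give $\operatorname{tr}(\theta(k)1_{\phi(A)})=\operatorname{tr}(k1_A)=\mu(A)=\mu_\#(\phi(A))$, forcing $\theta(k)$ to fix all of $\phi(A)$. The same equality, summed over $i$, gives $\operatorname{tr}(\Phi(f))=\sum_i\operatorname{tr}(\theta(g_i)1_{\phi(A_i)})=\sum_i\operatorname{tr}(g_i1_{A_i})=\operatorname{tr}(f)$, so $\Phi$ preserves the trace; that $\Phi$ is a homomorphism is a routine computation after refining two decompositions. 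By the preceding theorem, $\Phi$ is an isometric embedding, so $R$ is sofic.

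For the ``moreover'' clause, assume $G$ acts freely. For $g\in G\setminus\{e\}$ and any $A\in\operatorname{MAlg}(X,\mu)$, freeness gives $\operatorname{tr}(g1_A)=0$; by Lemma \ref{lemmacovariantembeddings} and the nonnegativity of the trace, $\operatorname{tr}(\theta(g)1_{\phi(A)})=0$. Taking $A=X$ yields $\operatorname{tr}(\theta(g))=0=\operatorname{tr}(g)$, and $\operatorname{tr}(\theta(e))=1=\operatorname{tr}(e)$ is automatic, so $\theta$ is trace-preserving and (3) applies. The main obstacle throughout is the trace bookkeeping in $(3)\Rightarrow(1)$, for which Lemma \ref{lemmacovariantembeddings} in its sharp (equality) form is the essential tool.
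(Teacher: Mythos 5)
Your proposal is correct and follows essentially the same route as the paper: restriction of a full-semigroup embedding to idempotents for $(1)\Rightarrow(2)$, covariance via conjugation, and for $(3)\Rightarrow(1)$ the definition $\Phi(f)=\bigvee_i\theta(g_i)1_{\phi(A_i)}$ with well-definedness reduced, via Lemma \ref{lemmacovariantembeddings} in its equality form, to showing $\theta(k)1_{\phi(A)}=1_{\phi(A)}$ when $k$ acts trivially on $A$. The only (cosmetic) divergence is that you define $\Phi$ on all of $[[R]]$ using countable decompositions and a Cauchy limit of partial joins, whereas the paper defines it on the dense sub-inverse-semigroup of finitely decomposable elements and extends by density; the two are interchangeable, and your explicit treatment of the ``moreover'' clause and of upgrading $\Phi(g)$ to genuine permutations fills in details the paper leaves implicit.
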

\begin{proof}
The last assertion follows from \ref{lemmacovariantembeddings}. Given an inverse semigroup $S$, denote by $E(S)$ the set of idempotents of $S$.

(1)$\Rightarrow$(2): If $R$ is sofic, consider an isometric embedding $\Phi:[[R]]\to\prod_{\mathcal{U}}[[n_k]]$, which we restrict to obtain isometric embeddings
\[\phi:\operatorname{MAlg}(X,\mu)=E([[R]])\to E(\prod_{\mathcal{U}}[[n_k]])=\prod_{\mathcal{U}}E([[n_k]])=\prod_{\mathcal{U}}\operatorname{MAlg}(n_k).\]
The actions of full groups on measure algebras are given by conjugation in full semigroups, from which follows that $(\theta,\phi)$ is covariant.

(2)$\Rightarrow$(3) is clear, by composing $\theta$ with the natural homomorphism from $G$ to $[R]$.

(3)$\Rightarrow$(1): Assume $(\theta,\phi)$ as in (3).

Suppose that $g\in[[R]]$ can be decomposed as a finite disjoint union $g=\bigvee_{n=1}^Ng_n1_{A_n}$, where $g_n\in G$ and the $A_n$ form a partition of $\operatorname{dom}(g)$. We define $\Phi(g)=\bigvee_{n=1}^N\theta(g_n)\phi(1_{A_n})\in\prod_{\mathcal{U}}[[n]]$. 

We show that $\Phi(g)$ does not depend on the decomposition $g=\bigvee_ng_n1_{A_n}$. Indeed, suppose $\bigvee_ng_n1_{A_n}=\bigvee_mh_m1_{B_m}$. Then $\bigvee_{n,m}g_n1_{A_n\cap B_m}=\bigvee_{n,m}h_m1_{A_n\cap B_m}$. For a fixed $n$, it is clear that $\theta(g)1_{\phi(A_n)}=\theta(g)\bigvee_m1_{\phi(A_n)\cap\phi(B_m)}=\bigvee_m\theta(g)1_{\phi(A_n)\cap\phi(B_m)}$, and similarly for $h$ and a fixed $m$.

Since $g_n1_{A_n\cap B_m}=h_m1_{A_n\cap B_m}$, in fact it suffices to prove that, for a given $g\in G$ and $A\in\operatorname{MAlg}(X,\mu)$, $g|_A=1_A$ implies $\theta(g)1_{\phi(A)}=1_{\phi(A)}$. Indeed, in this situation $\theta(g)1_{\phi(A)}$ has domain $\phi(A)$, and $\operatorname{tr}(\theta(g)1_{\phi(A)})=\operatorname{tr}(g1_A)=\mu(A)=\mu_\#(\phi(A))$ by Lemma \ref{lemmacovariantembeddings}, and this yields the result.

Moreover, the previous Lemma also readily implies that $\Phi$ is trace-preserving. It is easy enough to see that $\Phi$ preserves products, so $\Phi$ is a trace-preserving, hence isometric, morphism on the semigroup of those $g\in[[R]]$ which can be decomposed as $g=\bigvee_{n=1}^Ng_n1_{A_n}$ for $g_n\in G$, which is dense in $[[R]]$ and hence extends to an isometric embedding of $[[R]]$.\qedhere
\end{proof}

\begin{remark}
The description of soficity above is equivalent to the existence of a sofic embedding of the von Neumann algebra $vN(R)$ of $R$, as defined in \cite{paunescu2011}, in which it is proven that this coincides with the original definition of soficity by Elek and Lippner.
\end{remark}

\section{Permanence properties}

In this section we will be concerned with permanence properties of the class of sofic equivalence relations. When we need to specify the measure space $(X,\mu)$ for which an equivalence relation $R$ is sofic in the previously described sense, we will instead say that the \emph{system} $(X,\mu,R)$ is \emph{sofic}.

\begin{theorem}
Let $(X,\mu)$ be a standard probability space with a measure-preserving countable Borel equivalence relation $R$. Suppose $\mu$ has a disintegration of the form $\mu=\int_X p_xd\nu(x)$, where $\nu$-a.e. $p_x$ are $R$-invariant probability measures for which $(X,p_x,R)$ is sofic. Then $(X,\mu,R)$ is also sofic.

In particular, if a.e.\ ergodic component of $R$ is sofic, so is $(X,\mu,R)$.
\end{theorem}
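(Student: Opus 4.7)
The plan is to work directly with the almost-morphism formulation of soficity, combining sofic approximations of the $p_x$-systems via a finite Borel partition of $X$. Fix a finite subset $K\subseteq[[R]]$ and $\epsilon>0$; the goal is to produce $N\in\mathbb{N}$ and a $(K,\epsilon)$-almost morphism $\pi:[[R]]\to[[N]]$ for $(X,\mu,R)$.

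First I would pick a Borel representative $\tilde\varphi\in[[R]]_B$ for each $\varphi\in K$, and view these same representatives inside $[[R]]_{p_x}$ for $\nu$-a.e.\ $x$. For each $\varphi\in K$, the function $x\mapsto\operatorname{tr}_{p_x}(\tilde\varphi)=p_x\{y\in\operatorname{dom}(\tilde\varphi):\tilde\varphi(y)=y\}$ is $\nu$-measurable, and the defining property of the disintegration yields
\[\int_X\operatorname{tr}_{p_x}(\tilde\varphi)\,d\nu(x)=\operatorname{tr}_\mu(\varphi).\]
Approximating these finitely many functions by simple functions, I obtain a Borel partition $X=\bigsqcup_{i=1}^m Y_i$ with weights $a_i=\nu(Y_i)>0$, along with points $x_i\in Y_i$ for which $(X,p_{x_i},R)$ is sofic, such that
\[\Bigl|\operatorname{tr}_\mu(\varphi)-\sum_{i=1}^m a_i\operatorname{tr}_{p_{x_i}}(\tilde\varphi)\Bigr|<\epsilon/3\quad\text{for every }\varphi\in K.\]

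Next I would assemble $\pi$ as a weighted direct sum. Fix $\epsilon''>0$ small (depending on $\epsilon$ and $|K|$). By soficity of each $(X,p_{x_i},R)$, choose $(K,\epsilon'')$-almost morphisms $\pi_i:[[R]]\to[[N_i]]$ with respect to $p_{x_i}$, and adjust the $N_i$ using the embedding $[[n]]\hookrightarrow[[kn]]$ from the preamble so that $|N_i/N-a_i|<\epsilon''/m$ where $N=\sum_i N_i$. Define $\pi(\varphi)=\bigoplus_{i=1}^m\pi_i(\varphi)\in[[N]]$, and extend arbitrarily to the rest of $[[R]]$.

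Verification is then routine. Condition (i) is immediate, since $\bigoplus_i 1_{[N_i]}=1_{[N]}$ and $\bigoplus_i\varnothing=\varnothing$. For condition (ii), direct sums commute with composition, so
\[d_\#\bigl(\pi(\varphi\psi),\pi(\varphi)\pi(\psi)\bigr)=\sum_{i=1}^m\tfrac{N_i}{N}\,d_\#\bigl(\pi_i(\varphi\psi),\pi_i(\varphi)\pi_i(\psi)\bigr)<\epsilon''.\]
For condition (iii), $\operatorname{tr}_\#(\pi(\varphi))=\sum_i(N_i/N)\operatorname{tr}_\#(\pi_i(\varphi))$, which agrees with $\sum_i a_i\operatorname{tr}_{p_{x_i}}(\tilde\varphi)$ up to error $O(\epsilon'')$ by the choice of the $N_i$ and the sofic approximations, and the latter sum is within $\epsilon/3$ of $\operatorname{tr}_\mu(\varphi)$. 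Choosing $\epsilon''$ small enough forces the total error below $\epsilon$.

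The main technical obstacle is the first step: establishing $\nu$-measurability of $x\mapsto\operatorname{tr}_{p_x}(\tilde\varphi)$ and the consistency of the Borel representative choice. These follow by applying the disintegration identity $\mu(B)=\int p_x(B)\,d\nu(x)$ to the Borel sets $\operatorname{dom}(\tilde\varphi)$ and $\{y:\tilde\varphi(y)=y\}$, together with the observation that if two Borel representatives of $\varphi$ agree $\mu$-a.e.\ they agree $p_x$-a.e.\ for $\nu$-a.e.\ $x$, so the soficity hypothesis is independent of this choice after discarding a $\nu$-null set. The ``in particular'' clause is then the case where $\nu$ is the image of $\mu$ under the ergodic decomposition map.
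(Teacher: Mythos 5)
Your proposal is correct and follows essentially the same route as the paper's proof: integrate the fiberwise traces against $\nu$, discretize by a finite Borel partition on which the traces $\operatorname{tr}_{p_x}$ of the elements of $K$ are nearly constant, pick sofic representative points, and combine their almost-morphisms into a direct sum whose block sizes are proportional to the $\nu$-weights (the paper realizes the weighting by tensoring each block with $1_{[p_j]}$, which is the same device as your rescaling of the $N_i$). The verification of the trace and multiplicativity estimates matches the paper's computation.
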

\begin{proof}
Let's denote by $\operatorname{tr}_\mu$ the trace on $[[R]]_B$ with respect to $\mu$, and $\operatorname{tr}_x$ the trace with respect to $p_x$. For each $g\in[[R]]_B$,
\[\operatorname{tr}_\mu(g)=\int_X\operatorname{tr}_x(g)d\nu(x).\]

Let $K$ be a finite subset of $[[R]]_B$ and $\epsilon>0$. The maps $x\mapsto\operatorname{tr}_x(g)$, $g\in K$, take values in $[0,1]$, so by partitioning $[0,1]$ and taking preimages, we can find a finite partition $\left\{A_j\right\}_{j=1}^N$ of $X$ for which $|\operatorname{tr}_x(g)-\operatorname{tr}_y(g)|<\epsilon$ for all $g\in K$ whenever $x$ and $y$ belong to the same $A_j$. Now consider positive integers $M,p_1,\ldots,p_N$ such that
\[\sum_{j=1}^Np_j=M\qquad\text{and}\qquad|\nu(A_j)-\frac{p_j}{M}|<\frac{\epsilon}{N}\qquad\text{for all }j.\]

Fix elements $y_j\in A_j$ with $(X,p_{y_j},R)$ sofic, so we can take $(K,\epsilon)$-almost morphisms $\theta_j:[[R]]_B\to [[n_j]]$ for each $p_{y_j}$. Moreover, embedding all $[[n_j]]$ in the common semigroup $[[\prod_j n_j]]$, we can assume that all $n_j$ are equal to a unique $n$. Define $\theta(g)\in[[\sum_j n\times p_j]]=[[n\times M]]$ by
\[\theta(g)=\bigoplus_{j=1}^N(\theta_j(g)\otimes 1_{[p_j]})\]
Then for all $g\in K$,
\[\operatorname{tr}_{\#,nM}(\theta(g))=\frac{1}{Mn}\sum_{j=1}^Np_jn\operatorname{tr}_{\#,n}(\theta_j(g))=\frac{1}{M}\sum_{j=1}^N p_j\operatorname{tr}_{\#,n}(\theta_j(g)),\quad\text{and}\]
\begin{align*}
\operatorname{tr}_\mu(g)&=\sum_{j=1}^N\int_{A_j}\operatorname{tr}_x(g)d\nu(x)=\left(\sum_{j=1}^N\int_{A_j}\operatorname{tr}_{y_j}(g)d\nu(x)\right)\pm\epsilon=\left(\sum_{j=1}^N\nu(A_j)\operatorname{tr}_{y_j}(g)\right)\pm\epsilon\\
&=\sum_{j=1}^N(\frac{p_j}{M}\pm\frac{\epsilon}{N})\operatorname{tr}_{y_j}(g)\pm\epsilon=\sum_{j=1}^N\frac{p_j}{M}\operatorname{tr}_{y_j}(g)\pm 2\epsilon=\sum_{j=1}^N\frac{p_j}{M}\operatorname{tr}_{\#,n}(\theta_j(g))\pm3\epsilon\\
&=\operatorname{tr}_{\#,n}(\theta(g))\pm3\epsilon
\end{align*}
and for all $g,h\in K$,
\begin{align*}
d_{\#,nM}(\theta(g)\theta(h),\theta(gh))&=d_{\#,nM}(\bigoplus_{j=1}^N(\theta_j(g)\theta_j(h)\otimes 1_{p_j}),\bigoplus_{j=1}^N(\theta_j(gh)\otimes 1_{p_j}))\\
&=\frac{1}{Mn}\sum_{j=1}^n p_j nd_{\#,n}(\theta_j(g)\theta_j(h),\theta_j(gh))\\
&\leq\max_{1\leq j\leq N}d_{\#,n}(\theta_j(g)\theta_j(h),\theta_j(gh))<\epsilon.\qedhere
\end{align*}
\end{proof}

Given a non-null Borel subset $A$ of $X$, we denote by $\mu_A$ the normalized measure on $A$, i.e., $\mu_A(B)=\mu(B)/\mu(A)$ for $B\subseteq A$, by $R|_A=R\cap (A\times A)$ the restriction of $R$ to $(A,\mu_A)$, and by $\operatorname{tr}_A$ for the corresponding trace on $[[R|_A]]$.

\begin{proposition}\label{propositionsoficityforsubsets}
\begin{enumerate}
\item[(a)] If $R$ is sofic and $A\subseteq X$ is any (non-null) subset, then $R|_A$ is sofic.
\item[(b)] If $\left\{A_n\right\}$ is a countable Borel partition of $X$ by (non-null) $R$-invariant subsets, then $R$ is sofic if and only if each $R|_{A_n}$ is sofic.
\end{enumerate}
\end{proposition}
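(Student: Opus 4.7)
For part~(a), my plan is to use the ultraproduct description from Theorem~2.2. Start with a trace-preserving embedding $\Phi\colon[[R]]\to\prod_{\mathcal{U}}[[n_k]]$ and write $\Phi(1_A)=(B_k)_{\mathcal{U}}$ with $B_k\subseteq[n_k]$; trace-preservation forces $|B_k|/n_k\to\mu(A)>0$ along $\mathcal{U}$, so in particular $|B_k|\to\infty$ along $\mathcal{U}$. Since each $g\in[[R|_A]]_B$ satisfies $\operatorname{dom}(g),\operatorname{ran}(g)\subseteq A$, the image $\Phi(g)$ has domain and range inside $(B_k)_{\mathcal{U}}$; identifying $B_k$ with $[|B_k|]$ then yields a map $\Psi\colon[[R|_A]]\to\prod_{\mathcal{U}}[[|B_k|]]$. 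It is an inverse-monoid homomorphism with $\Psi(1_A)=1$, and the trace re-normalization gives
\[\operatorname{tr}(\Psi(g))=\lim_{\mathcal{U}}\frac{|\operatorname{Fix}(\Phi(g)_k)|}{|B_k|}=\frac{\operatorname{tr}_\mu(g)}{\mu(A)}=\operatorname{tr}_{\mu_A}(g),\]
so $\Psi$ preserves the $\mu_A$-trace, hence is isometric by Theorem~2.2, proving $R|_A$ is sofic.

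For part~(b), the ``only if'' direction is immediate from~(a). For the converse I would work directly from Ozawa's definition: fix a finite $K\subseteq[[R]]_B$ and $\epsilon>0$, and truncate by choosing $N$ with $\sum_{n>N}\mu(A_n)<\epsilon$. Because each $A_n$ is $R$-invariant, every $g\in K$ splits as a disjoint union $g=\bigvee_n g|_{A_n}$ with $g|_{A_n}\in[[R|_{A_n}]]_B$. For $n\le N$, soficity of $R|_{A_n}$ supplies a $(K|_{A_n},\epsilon)$-almost morphism $\pi_n\colon[[R|_{A_n}]]\to[[m_n]]$; embedding each $[[m_n]]$ into a common $[[m]]$, I may assume $m_n=m$. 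Choose positive integers $p_1,\dots,p_N$ and $M$ with $\sum_{n\le N}p_n\le M$ and $|\mu(A_n)-p_n/M|<\epsilon/N$, and define $\pi\colon[[R]]_B\to[[mM]]$ by setting $\pi(g)$ to act as $\pi_n(g|_{A_n})\otimes 1_{[p_n]}$ on the $n$-th block of size $mp_n$ (for $n\le N$) and as $\varnothing$ on the remaining block of size $m(M-\sum_n p_n)$. A weighted-average estimate identical in structure to the proof of the disintegration theorem then shows $\pi$ is a $(K,C\epsilon)$-almost morphism for some universal constant~$C$.

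The main obstacle I expect is the tail bookkeeping in~(b): with countably many pieces there is no finite direct sum into which to combine all the $\pi_n$ simultaneously, so the argument hinges on (i)~truncating at $N$ with the remaining tail of total measure below~$\epsilon$ and (ii)~filling the missing weight with an $\varnothing$-block that is neutral for products and traces, so its only cost is a small additive error. Part~(a) is comparatively routine; its only delicate points are to track the scaling factor $1/\mu(A)$ correctly in the trace and to ensure that $|B_k|\to\infty$ along~$\mathcal{U}$, so that the target $\prod_{\mathcal{U}}[[|B_k|]]$ is a legitimate ultraproduct of the required form.
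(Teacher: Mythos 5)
Your proof is correct and follows essentially the same route as the paper: part (a) is the paper's corner-compression by $\theta(1_A)$ with the trace renormalized by $\mu(A)$, merely phrased in the ultraproduct rather than in terms of finite $(K,\epsilon)$-almost morphisms. For part (b) the paper simply invokes its disintegration theorem (with $\nu=\mu$ and $p_x=\mu_{A_j}(\cdot\cap A_j)$ for $x\in A_j$); your weighted direct sum $\bigoplus_{n\le N}\pi_n(g|_{A_n})\otimes 1_{[p_n]}$ with a truncated, $\varnothing$-filled tail is exactly the construction inside that theorem's proof, so you are inlining the same argument rather than citing it.
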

\begin{proof}
\begin{enumerate}
\item[(a)] Let $K\subseteq[[R|_A]]$ be a finite subset and $\epsilon>0$. Since $[[R|_A]]$ is contained in $[[R]]$ (as a semigroup, but with a different metric), there exists a $(K,\epsilon)$-almost morphism $\theta:[[R]]\to[[n]]$ for some $n\in\mathbb{N}$. We may assume that $1_A\in K$, and that $\theta(1_A)$ is an idempotent in $[[n]]$. For $g\in[[R|_A]]$, we have $1_Ag1_A=g$, so switching $\theta(g)$ by $\theta(1_A)\theta(g)\theta(1_A)$ if necessary, we can assume the range and domain of $\theta(g)$ are contained in $Y:=\operatorname{dom}(\theta(1_A))$. This defines a map $\theta_A:[[R|_A]]\to[[Y]]$.

To see that $\theta_A$ approximately preserves the trace, note that the trace on $[[R|_A]]$ and the trace on $[[Y]]$ are given respectively by
\[\operatorname{tr}_{A}(g)=\frac{\operatorname{tr}_{\mu}(g)}{\operatorname{tr}_{\mu}(1_A)}\qquad\text{and}\qquad\operatorname{tr}_{\#,Y}(\theta_A(g))=\frac{\operatorname{tr}_{\#,n}(\theta(g))}{\operatorname{tr}_{\#,n}(\theta(1_A))},\]
and these numbers are as close as necessary if $\epsilon$ is small enough. The distances are dealt with similarly, so $\theta_A$ approximately preserves products.
\item[(b)] Use the previous theorem with $\nu=\mu$ and $p_x(B)=\mu_{A_j}(B\cap A_j)$, where $A_j$ is the only element of the partition with $x\in A_j$.\qedhere
\end{enumerate}
\end{proof}

Now we will deal with finite-index subrelations, as defined in \cite{MR1007409}. Let $R$ and $S$ be countable Borel probability measure-preserving equivalence relations on $(X,\mu)$ with $R\subseteq S$. Then each $S$-class can be decomposed in (at most) countably many $R$-classes. For $x\in X$, we denote by  $J(x)$ the number of $R$-classes contained in $S(x)$, and note that $J:X\to\left\{1,2,\ldots,\infty\right\}$ is measurable and $S$-invariant.

\begin{definition}
$R$ is said to have \emph{finite index} in $S$ if $J(x)<\infty$ $\mu$-a.e.
\end{definition}

Let $Y$ be an $S$-invariant subset of $X$ on which $J$ is constant, say $J(x)=n$ a.e.\ on $Y$. Then there exist measurable maps $\psi_1,\ldots,\psi_n:Y\to Y$ such that for $\mu$-a.e.\ $x\in Y$, $\left\{R(\psi_i(x)):1\leq i\leq n\right\}$ is a partition of $S(x)$. The maps $\psi_1,\ldots,\psi_n$ are called \emph{choice functions} for $R\subseteq S$ (inside $Y$). Define a map $\sigma:S|_Y\to\mathfrak{S}_{n_k}$ by setting $\sigma(y,x)(i)=j$ if $\psi_i(x)R\psi_j(y)$. Then $\sigma$ is a 1-cocycle (i.e., a groupoid morphism).

We will say that $R\subseteq S$ admits \emph{invertible choice functions in $Y$} if there exists choice functions $\psi_1,\ldots,\psi_n$ for $R\subseteq S$ in $Y$ which are automorphisms. This is the case, for example, when $R|_Y$ is ergodic (\cite{MR1007409}, Lemma 1.3). Moreover, in this case we have $\psi_i\in[S]$.

Finally, we will say that $R\subseteq S$ admits invertible choice functions if it admits invertible choice functions in each set $Y_n=\left\{x\in X:J(x)=n\right\}$.

\begin{theorem}
Suppose $R\subseteq S$ is of finite index and admits invertible choice functions (e.g.\ $R$ is ergodic). If $R$ is sofic, so is $S$.
\end{theorem}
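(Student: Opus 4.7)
The plan is to reduce to the case where the relative index $J \equiv n$ is constant, and then build an isometric embedding of $[[S]]$ into an ultraproduct by taking the $n$-fold amplification of a sofic embedding of $[[R]]$, with the choice functions $\psi_i$ realized as permutations of the $n$ new ``slots''.

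For the reduction, let $Y_m = \{x : J(x) = m\}$ for $m \geq 1$; the finite-index hypothesis makes $\{Y_m\}_{m \geq 1}$ a countable Borel partition of $X$ (mod null) into $S$-invariant and hence $R$-invariant subsets. Proposition \ref{propositionsoficityforsubsets}(a) implies each $R|_{Y_m}$ is sofic, and Proposition \ref{propositionsoficityforsubsets}(b) reduces soficity of $S$ to that of each $S|_{Y_m}$, so I may assume $J \equiv n$ with globally-defined invertible choice functions $\psi_1 = \operatorname{id}, \psi_2, \ldots, \psi_n \in [S]$. The key structural fact is that each $g \in [[S]]$ admits a canonical disjoint decomposition $g = \bigvee_{i=1}^n h_i \psi_i 1_{B_i}$, where $B_i = \{x \in \operatorname{dom}(g) : g(x) \in R(\psi_i(x))\}$ partitions $\operatorname{dom}(g)$ and $h_i := g\psi_i^{-1}|_{\psi_i(B_i)} \in [[R]]$; thus $[[S]]$ is generated (under joins, products and inverses) by $[[R]] \cup \{\psi_1, \ldots, \psi_n\}$.

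I would then adopt the covariant-pair formulation of Proposition \ref{propositionequivalencesofdefinitionofsoficity}. Soficity of $R$ yields an isometric embedding $\theta_R : [R] \to \prod_\mathcal{U} \mathfrak{S}_{n_k}$ and an isometric embedding $\phi : \operatorname{MAlg}(X,\mu) \to \prod_\mathcal{U} \operatorname{MAlg}(n_k)$ forming a covariant pair. In the amplified target $\prod_\mathcal{U} \operatorname{MAlg}(n_k \cdot n) \cong \prod_\mathcal{U} \operatorname{MAlg}(n_k) \otimes \operatorname{MAlg}(n)$ I define $\phi'(A) = \bigsqcup_{j=1}^n \phi(\psi_j^{-1} A) \otimes \{j\}$; this is isometric because the $n$-to-$1$ projection $\Psi : X \times [n] \to X$, $(x,j) \mapsto \psi_j(x)$, is measure-preserving (each $\psi_j$ being a measure-preserving bijection). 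For the group $G_S = \langle G, \psi_1, \ldots, \psi_n\rangle$, where $G$ is a countable group generating $R$, I would set $\theta_S(h) = \theta_R(h) \otimes 1_{[n]}$ for $h \in G$ and define $\theta_S(\psi_i)$ as a permutation of $[n_k]\times[n]$ built from the cocycle $\sigma : S \to \mathfrak{S}_n$ of the text together with $R$-corrections coming from applying the decomposition of the previous paragraph to $\psi_i$ itself.

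The main obstacle is checking that this $\theta_S$ is a well-defined group homomorphism forming a covariant pair with $\phi'$ and preserving the trace. The subtlety is that the $\psi_i$'s typically do not normalize $[R]$: the conjugate $\psi_j^{-1} h \psi_j$ of an $h \in [R]$ lies in $[S]$ but generally not in $[R]$, so the naive block-diagonal ansatz $\theta_S(h) = \bigoplus_j \theta_R(\psi_j^{-1} h \psi_j)$ is not even well-typed. The cocycle $\sigma$ is what mediates the correct mixing of the $[n_k]$- and $[n]$-factors, and the invertibility of each $\psi_i$ is essential to ensure that the resulting slot-shifting maps remain bijections at every step. With this in hand, Lemma \ref{lemmacovariantembeddings} applied piecewise over each $B_i$ gives trace-preservation of $\theta_S$, whence $S$ is sofic by Proposition \ref{propositionequivalencesofdefinitionofsoficity}(3).
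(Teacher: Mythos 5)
Your reduction to constant index $J\equiv n$ via Proposition \ref{propositionsoficityforsubsets} is exactly the paper's first step, and your decomposition $g=\bigvee_{i}h_i\psi_i1_{B_i}$ with $h_i\in[[R]]$ is a correct single-indexed version of the paper's key structural observation. The gap is in the heart of the argument: actually writing down the embedding and verifying it is a multiplicative, trace-preserving map. Where your proposal is explicit, it is incorrect: the formula $\theta_S(h)=\theta_R(h)\otimes 1_{[n]}$ for $h\in G$ is not covariant with $\phi'(A)=\bigsqcup_{j}\phi(\psi_j^{-1}A)\otimes\{j\}$, since covariance would force $\phi(h\psi_j^{-1}A)=\phi(\psi_j^{-1}hA)$ for all $A$, i.e.\ $\psi_jh\psi_j^{-1}=h$ a.e., and the $\psi_j$ do not commute with $G$. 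The conceptual reason is that the cocycle $\sigma$ restricted to $R$ is generally \emph{not} trivial: choice functions need not be $R$-equivariant ($x\mathrel{R}y$ does not give $\psi_i(x)\mathrel{R}\psi_i(y)$ for $i\geq 2$), so even elements of $[R]$ must permute the $n$ slots in a point-dependent way, which a constant tensor factor $1_{[n]}$ cannot record.

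Second, you never define $\theta_S(\psi_i)$, and you defer precisely the content of the theorem --- that $\theta_S$ is a well-defined homomorphism on $G_S=\langle G,\psi_1,\dots,\psi_n\rangle$ forming a covariant tracial pair --- as ``the main obstacle.'' Prescribing values on generators of $G_S$ requires checking every relation of that group, which is not feasible without a closed formula valid on all of $[S]$ simultaneously. The paper sidesteps this by defining the map directly on all of $[[S]]$: with $A_{f;j,i}=\{x\in\operatorname{dom}f:\sigma(f(x),x)(i)=j\}$ (the double-indexed refinement of your $B_i$), one sets
\[\Xi(f)=\bigvee_{i,j}\Phi\bigl(\psi_jf\psi_i^{-1}|_{\psi_i(A_{f;j,i})}\bigr)\otimes E_{j,i},\]
then proves multiplicativity from the cocycle identity $\sigma(fgy,y)=\sigma(fgy,gy)\sigma(gy,y)$ and trace-preservation by showing $\psi_i$ carries $\{y:fy=y\}$ onto the fixed points of $\psi_if\psi_i^{-1}|_{\psi_i(A_{f;i,i})}$. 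If you replace your generator-based ansatz with this single global formula, the rest of your outline goes through; as written, the construction is incomplete and its explicit part is wrong.
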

\begin{proof}
The sets $\left\{x\in X:J(x)=n\right\}$ are $S$-invariant and partition $X$, so \ref{propositionsoficityforsubsets} allows us to restrict to the case when the index is constant. Suppose that $\psi_1,\ldots,\psi_N$ are invertible choice functions for $R\subseteq S$ with associated cocycle $\sigma:S\to\mathfrak{S}_N$.

For each $f\in [[S]]$ and each pair $(i,j)\in[N]^2$, let $A_{f;j,i}=\left\{x\in\operatorname{dom}f:\sigma(f(x),x)(i)=j\right\}$. If $x\in A_j$, then $(\psi_i(x),\psi_j(f(x)))\in R$, which implies that $\psi_jf\psi_i^{-1}|_{\psi_i(A_{f;j,i})}\in[[S]]$.

Let $\Phi:[[R]]\to\prod_{\mathcal{U}}[[n_k]]$ be a sofic embedding. Denote by $E_{j,i}$ the usual matrix unit with $1$ in the $(i,j)$-th entry and 0 everywhere else (or rather the element of $[[n]]$ associated to it). Define $\Xi:[[S]]\to\prod_{\mathcal{U}}[[n_k\times N]]$ by
\[\Xi(f)=\bigvee_{i,j}\Phi(\psi_j f\psi_i^{-1}|_{\psi_i(A_{f;j,i})})\otimes E_{j,i}\]

First let's show that $\Xi$ is well-defined, i.e., that the terms in the right-hand side have disjoint domains and images: Suppose $(i,j)\neq(k,l)$. Then
\begin{align*}
&(\Phi(\psi_j f\psi_i^{-1}|_{\psi_i(A_{f;j,i})})\otimes E_{j,i})(\Phi(\psi_l f\psi_k^{-1}|_{\psi_k(A_{f;l,k})})\otimes E_{l,k})^{-1}\\
&\hspace{60pt}=\Phi(\psi_j f\psi_i^{-1}\psi_k f^{-1}\psi_l^{-1}|_{\left\{x\in\psi_l(f(A_{f;l,k})):\psi_k(f^{-1}(\psi_l^{-1}(x)))\in\psi_i(A_{f;j,i})\right\}})\otimes(E_{j,i}E_{k,l})
\end{align*}
If $i\neq k$, the second term above is zero. If $i=k$ but $j\neq l$, the domain of the map on which we are applying $\Phi$ becomes
\[\left\{x\in\psi_l(f(A_{f;l,i})):f^{-1}\psi_l^{-1}(x)\in A_{f;j,i}\right\}=\psi_l(f(A_{f;l,i}\cap A_{f;j,i}))=\varnothing.\]
This proves that the domains of the maps in the definition of $\Xi(f)$ are disjoint. The images are dealt with similarly, and so $\Xi$ is well-defined.

Now we need to show that $\Xi$ is a morphism. Suppose $f,g\in[[R]]$. We have
\begin{align*}
\Xi(f)\Xi(g)&=\bigvee_{i,j,k,l}\Phi(\psi_j f\psi_i^{-1}\psi_l g\psi_k^{-1}|_{\left\{x\in\psi_k(A_{g;l,k}):\psi_l g\psi_k^{-1}(x)\in\psi_i(A_{f;j,i})\right\}})\otimes E_{j,i}E_{l,k}\\
&=\bigvee_{i,j,k}\Phi(\psi_j fg\psi_k^{-1}|_{\left\{x\in\psi_k(A_{g;i,k}):g\psi_k^{-1}(x)\in A_{f;j,i}\right\}})\otimes E_{j,k}.
\end{align*}
This should be equal to $\Xi(fg)=\bigvee_{k,j}\Phi(\psi_j(fg)\psi_k^{-1}|_{\psi_k(A_{fg;j,k})})\otimes E_{j,k}$, so we need simply to show that for each $j$ and $k$,
\[\bigvee_i\left\{x\in\psi_k(A_{g;i,k}):g\psi_k^{-1}(x)\in A_{f;j,i}\right\}=\psi_k(A_{fg;j,k})\]
Let $x$ in the left-hand side, and let $y=\psi_k^{-1}(x)$, so for some $i$, $\sigma(gy,y)(k)=i$ and $\sigma(fgy,gy)(i)=j$, so
\[\sigma(fg y,y)(k)=\sigma(fgy,gy)\sigma(gy,y)(k)=\sigma(fgy,gy)(i)=j,\]
thus $y\in A_{gy;j,k}$, and $x=\psi_k(y)\in\psi_k(A_{fg;j,k}$.

For the converse inclusion, simply take $y=\psi_k^{-1}(x)$ again and $i=\sigma(gy,y)(k)$. Thus we've proved $\Xi$ is a morphism.

Finally, we need to show that $\Xi$ is trace-preserving. Note that
\[\operatorname{tr}\Xi(f)=\frac{1}{N}\sum_{i=1}^N\operatorname{tr}(\psi_i f\psi_i^{-1}|_{\psi_i(A_{f;i,i})}),\]
so we are done if we prove that $\operatorname{tr}(\psi_i f\psi_i^{-1}|_{\psi_i(A_{f;i,i})})=\operatorname{tr}(f)$. More specifically, let's show that $\left\{x\in\psi_i(A_{f;i,i}):\psi_i f\psi_i^{-1}(x)=x\right\}=\psi_i(\left\{y\in\operatorname{dom}f: fy=y\right\}$.

Let $x$ in the left-hand side, and let $y=\psi_i^{-1}x$. Then $fy=\psi_i^{-1} f\psi_i^{-1}x=\psi_i^{-1}x=y$. Conversely, suppose $y\in\operatorname{dom}f$ with $fy=y$, and let $x=\psi_i(y)$. $fy=y$ implies $\psi_i fy=\psi_i y$, i.e., $y\in A_{f;i,i}$, and also implies $\psi_i f\psi_i^{-1}(x)=x$, so $x$ is in the left-hand-side.

Finally, since $\psi_i\in[S]$ and $S$ is measure-preserving, we are done.\qedhere
\end{proof}

Recall that $R$ is \emph{periodic} if a.e.\ class of $R$ is finite, and \emph{aperiodic} is a.e.\ class of $R$ is infinite.

\begin{corollary}\label{periodicequivalencerelationsaresofic}
Each hyperfinite (amenable) equivalence relation $R$ is sofic.
\end{corollary}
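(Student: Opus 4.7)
The plan is to peel back the definition of hyperfinite and apply the permanence theorems just established, reducing the problem all the way down to the soficity of the diagonal relation on $(X,\mu)$.

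Writing $R = \bigcup_n R_n$ as an increasing union of periodic Borel sub-equivalence relations (the definition of hyperfinite), Remark \ref{increasingunionofsoficequivalencerelations} reduces the claim to showing that each periodic $R_n$ is sofic. In turn, the sets $Y_k := \{x : |R_n(x)| = k\}$ for $k \in \mathbb{N}$ are $R_n$-invariant and partition $X$, so Proposition \ref{propositionsoficityforsubsets}(b) further reduces the problem to the case in which $R$ has constant class size $k < \infty$.

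For such an $R$ I would view the diagonal $\Delta = \{(x,x) : x \in X\}$ as a sub-equivalence relation of $R$ of finite index $k$. A standard Borel selection argument yields a Borel transversal $T \subseteq X$ for $R$, and the $R$-invariance of $\mu$ provides a measure-preserving Borel isomorphism $(X,\mu) \cong (T,\mu_T) \times ([k],\mu_\#)$ under which the $R$-classes become the fibres $\{t\}\times[k]$. Under this identification the formulas $\psi_i(t,j) = (t,(i+j)\bmod k)$ define $k$ measure-preserving automorphisms of $X$ whose values on each $x \in X$ enumerate the entire $R$-class of $x$, hence form invertible choice functions for $\Delta \subseteq R$. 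The trivial relation $\Delta$ is itself sofic, since $[[\Delta]] = \operatorname{MAlg}(X,\mu)$ embeds isometrically into $\prod_{\mathcal{U}}\operatorname{MAlg}(n_k) \subseteq \prod_{\mathcal{U}}[[n_k]]$ via the standard approximation of a standard probability space by finite equi-partitions. Applying the preceding finite-index extension theorem to $\Delta \subseteq R$ then yields the soficity of $R$.

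The most technically delicate point is the measurable identification $(X,\mu) \cong (T,\mu_T) \times ([k],\mu_\#)$, which rests on the existence of a Borel transversal for a finite-class Borel equivalence relation together with the fact that every element of $[[R]]_B$ preserves $\mu$; the soficity of $\Delta$ is elementary by comparison. With these two ingredients in hand, the proof becomes a direct chaining of the permanence results developed earlier in this section.
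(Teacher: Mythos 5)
Your proposal is correct and follows essentially the same route as the paper: reduce to the periodic case via the remark on increasing unions, then apply the finite-index extension theorem to the diagonal relation $\Delta\subseteq R$, which is sofic and admits invertible choice functions. The only difference is that you spell out the transversal/product-decomposition argument for the choice functions, which the paper dismisses as ``easy to show.''
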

\begin{proof}
If $R$ is periodic, the equality relation $I=\left\{(x,x):x\in X\right\}$ is sofic, has finite index in $R$, and it is easy to show that it admits invertible choice functions. For general hyperfinite relations, apply the previous case and the remark above Lemma \ref{lemmacovariantembeddings}.\qedhere
\end{proof}

\begin{theorem}
$(X,\mu,R)$ and $(Y,\nu,S)$ are sofic if and only if $(X\times Y,\mu\times\nu,R\times S)$ is sofic.
\end{theorem}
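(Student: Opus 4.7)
The plan is to apply Proposition \ref{propositionequivalencesofdefinitionofsoficity}(3) in both directions. Fix countable groups $G$ and $H$ acting by measure-preserving automorphisms on $(X,\mu)$ and $(Y,\nu)$ and inducing $R$ and $S$, respectively; then $G\times H$ acts on $(X\times Y,\mu\times\nu)$ and induces $R\times S$. All tensor products of permutations or of subsets below refer to the operations introduced in Section 1, and I may assume a common ultrafilter $\mathcal{U}$ throughout by the theorem at the end of Section 1.

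For the forward direction, I would start from trace-preserving covariant pairs $(\theta_R,\phi_R)$ and $(\theta_S,\phi_S)$ provided by Proposition \ref{propositionequivalencesofdefinitionofsoficity}(3), and set $\theta(g,h)=\theta_R(g)\otimes\theta_S(h)\in\prod_{\mathcal{U}}\mathfrak{S}_{n_km_k}$ and, on measurable rectangles, $\phi(A\times B)=\phi_R(A)\otimes\phi_S(B)\in\prod_{\mathcal{U}}\operatorname{MAlg}(n_km_k)$. The finite-level identities $(\sigma_1\otimes\sigma_2)(\tau_1\otimes\tau_2)=(\sigma_1\tau_1)\otimes(\sigma_2\tau_2)$ and $\operatorname{tr}(\sigma\otimes\tau)=\operatorname{tr}(\sigma)\operatorname{tr}(\tau)$ make $\theta$ a homomorphism, and trace-preservation then follows from Fubini, since $\operatorname{tr}_{R\times S}(g,h)=\operatorname{tr}_R(g)\operatorname{tr}_S(h)$. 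Next I would extend $\phi$ additively to the algebra of finite disjoint unions of rectangles; the factorization $(\mu\times\nu)(A\times B)=\mu_\#(\phi_R(A))\mu_\#(\phi_S(B))=\mu_\#(\phi_R(A)\otimes\phi_S(B))$ makes this extension isometric, and density of rectangles in the symmetric-difference metric lets me extend $\phi$ to all of $\operatorname{MAlg}(X\times Y,\mu\times\nu)$. Covariance is immediate on rectangles and propagates to the full measure algebra by continuity, giving the pair $(\theta,\phi)$ required by Proposition \ref{propositionequivalencesofdefinitionofsoficity}(3).

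For the converse, I would let $(\Theta,\Phi)$ be a trace-preserving covariant pair for $G\times H$ witnessing soficity of $R\times S$ and restrict it along the first factor: set $\theta_R(g):=\Theta(g,e_H)$ and $\phi_R(A):=\Phi(A\times Y)$. Isometry of $\phi_R$ follows from $\mu(A\triangle B)=(\mu\times\nu)((A\triangle B)\times Y)$, and trace-preservation of $\theta_R$ from $\operatorname{tr}_{R\times S}(g,e_H)=\mu\{x:gx=x\}\cdot\nu(Y)=\operatorname{tr}_R(g)$; covariance of $(\theta_R,\phi_R)$ is inherited from $(\Theta,\Phi)$ evaluated on rectangles of the form $A\times Y$. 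Hence $R$ is sofic, and $S$ follows symmetrically.

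The genuine obstacle is the extension of $\phi$ beyond rectangles in the forward direction: a generic element of $[[R\times S]]$ is \emph{not} expressible as a disjoint union of tensors $f\otimes g$ with $f\in[[R]]$ and $g\in[[S]]$, so one cannot just tensor the two embeddings $[[R]]\to\prod_{\mathcal{U}}[[n_k]]$ and $[[S]]\to\prod_{\mathcal{U}}[[m_k]]$ directly on the full semigroup level. Working via the covariant-pair formulation of Proposition \ref{propositionequivalencesofdefinitionofsoficity}(3) circumvents this, because then one only has to handle the action on the idempotent (i.e.\ measure-algebra) side, where the rectangle subalgebra is both dense and compatible with tensors.
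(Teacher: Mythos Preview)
Your proof is correct and follows essentially the same route as the paper: for the forward direction you build the covariant pair $(\theta,\phi)$ by tensoring on the group and on rectangles, then extend $\phi$ by density---exactly as in the paper, only with more detail spelled out. For the converse the paper takes a marginally more direct line, composing a sofic embedding of $[[R\times S]]$ with the canonical trace-preserving inclusion $[[R]]\hookrightarrow[[R\times S]]$, $g\mapsto\bigl((x,y)\mapsto(g(x),y)\bigr)$, rather than restricting a covariant pair; but your restriction $(\theta_R,\phi_R)=(\Theta(\,\cdot\,,e_H),\Phi(\,\cdot\times Y))$ is equivalent and equally valid.
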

\begin{proof}
Let $G$ and $H$ be countable groups acting in a pmp way on $X$ and $Y$, respectively, and inducing the respective equivalence relations. Then $G\times H$ acts on $X\times Y$, via $(g,h)(x,y)=(gx,hy)$, and this action induces $R\times S$.

Take covariant sofic pairs $(\iota_R,\phi_R):(G,\operatorname{MAlg}(X,\mu))\to(\prod_{\mathcal{U}}\mathfrak{S}_{n_k},\operatorname{MAlg}(n_k))$ and $(\iota_S,\phi_S):(H,\operatorname{MAlg}(S,\nu))\to(\prod_{\mathcal{U}}\mathfrak{S}_{m_k},\operatorname{MAlg}(m_k))$.

We define $\kappa:G\times H\to\prod_{\mathcal{U}}\mathfrak{S}_{n_k\times m_k}$ by $\kappa(g,h)=\iota_R(g)\otimes\iota_S(h)$ and $\psi:\operatorname{MAlg}(X\times Y,\mu\times\nu)\to\prod_{\mathcal{U}}\operatorname{MAlg}(n_k\times m_k)$, defined on rectangles by $\psi(A\times B)=\phi_R(A)\times\phi_S(B)$, which extends uniquely to a semigroup embedding. Then $(\kappa,\psi)$ is tracial and covariant for $R\times S$.

For the converse, simply note that there is a canonical tracial embedding $T:[[R]]\to[[R\times S]]$, namely $T(g)(x,y)=(g(x),y)$, and similarly for $S$. Simply compose any sofic embedding of $[[R\times S]]$ with these to obtain sofic embeddings of $[[R]]$ and $[[S]]$.\qedhere
\end{proof}

\section{Soficity and full groups}

A well-known theorem of Dye \cite{MR0158048} states that when $R$ is aperiodic the full group $[R]$ completely determines $R$. With this in mind, we prove that $R$ is sofic if and only if $[R]$ embeds isometrically into $\prod_{\mathcal{U}}\mathfrak{S}_{n_k}$ in ``almost all cases'', namely when $R$ does not have singleton classes. This solves a question posed by Conley--Kechris--Tucker-Drob \cite{MR3035288} in this case.

\begin{lemma}
Let $\theta:[R]\to\prod_{\mathcal{U}}\mathfrak{S}_{n_k}$ be an isometric embedding. If $g,h\in[R]$ with $\operatorname{supp}g=\operatorname{Fix}h$, then $\operatorname{supp}(\theta(g))=\operatorname{Fix}(\theta(h))$.
\end{lemma}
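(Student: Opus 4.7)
The plan is to turn the set-theoretic hypothesis into a single algebraic identity ($\operatorname{tr}(gh)=0$) and then push it through $\theta$ using trace preservation.

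First I would unpack the hypothesis. The condition $\operatorname{supp}(g)=\operatorname{Fix}(h)$, combined with taking complements, says that (modulo null sets) $\operatorname{supp}(h)=\operatorname{Fix}(g)$, so $g$ and $h$ have disjoint supports whose union is $X$. On $\operatorname{supp}(g)$ the map $h$ is the identity, so $gh$ acts there as $g$ (moving every point of $\operatorname{supp}(g)$); on $\operatorname{supp}(h)$ the map $g$ is the identity, so $gh$ acts there as $h$ (moving every point of $\operatorname{supp}(h)$). Hence $gh$ is fixed-point free and $\operatorname{tr}(gh)=0$.

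Next I would invoke trace preservation of $\theta$. Since $\theta$ is a group homomorphism it sends $1_X$ to $1$, and for $f\in[R]$ one has $\operatorname{tr}(f)=1-d_\mu(f,1_X)$; the same formula holds in $\prod_{\mathcal{U}}\mathfrak{S}_{n_k}$ with $d_\#$. Isometry of $\theta$ then gives $\operatorname{tr}(\theta(f))=\operatorname{tr}(f)$ for every $f\in[R]$. Applying this to $gh$, $g$, and $h$ yields
\begin{align*}
\operatorname{tr}(\theta(g)\theta(h)) &= \operatorname{tr}(\theta(gh)) = \operatorname{tr}(gh) = 0, \\
\mu_\#(\operatorname{Fix}(\theta(g))) + \mu_\#(\operatorname{Fix}(\theta(h))) &= \operatorname{tr}(g)+\operatorname{tr}(h) = \mu(\operatorname{Fix}(g))+\mu(\operatorname{Fix}(h)) = 1.
\end{align*}

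Finally I would conclude via the trivial pointwise inclusion $\operatorname{Fix}(\theta(g))\cap\operatorname{Fix}(\theta(h))\subseteq\operatorname{Fix}(\theta(g)\theta(h))$, which lifts to the ultraproduct and gives
\[\mu_\#\bigl(\operatorname{Fix}(\theta(g))\cap\operatorname{Fix}(\theta(h))\bigr)\leq\operatorname{tr}(\theta(g)\theta(h))=0.\]
Together with the displayed sum equal to $1$, this forces $\operatorname{Fix}(\theta(g))$ and $\operatorname{Fix}(\theta(h))$ to be complementary in $\prod_{\mathcal{U}}\operatorname{MAlg}(n_k)$, i.e.\ $\operatorname{supp}(\theta(g))=\operatorname{Fix}(\theta(h))$. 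The only real content is spotting the identity $\operatorname{tr}(gh)=0$; after that the argument is a direct computation, so I do not expect a substantial obstacle.
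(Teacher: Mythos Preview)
Your argument is correct and essentially matches the paper's proof: both reduce the hypothesis to a single numerical identity preserved by $\theta$, then use $\operatorname{Fix}(\theta(g))\cap\operatorname{Fix}(\theta(h))$ having measure zero together with $\operatorname{tr}(\theta(g))+\operatorname{tr}(\theta(h))=1$ to conclude complementarity. The only cosmetic difference is that the paper uses the identity $d_\mu(g,h)=1$ (pushed through by isometry directly) rather than your $\operatorname{tr}(gh)=0$, but since $d_\mu(g,h)=1-\operatorname{tr}(g^{-1}h)$ these are equivalent formulations of the same observation.
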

\begin{proof}
Suppose $\operatorname{supp}(g)=\operatorname{Fix} h$. Then $d_\mu(g,h)=\operatorname{tr}(g)+\operatorname{tr}(h)=1$, so $d_\#(\theta(g),\theta(h))=\operatorname{tr}(\theta(g))+\operatorname{tr}(\theta(h))=1$, which implies
\[\mu_\#(\operatorname{Fix}(\theta(g))\cap\operatorname{Fix}(\theta(h))= 0\qquad\text{and}\qquad\mu_\#(\operatorname{Fix}(\theta(g)))+\mu_\#(\operatorname{Fix}(\theta(h))=1,\]
and this means that $\operatorname{Fix}(\theta(h))$ is the complement of $\operatorname{Fix}(\theta(g))$, i.e., $\operatorname{supp}(\theta(g))$.
\end{proof}

\begin{theorem}
An aperiodic, countable measure-preserving equivalence relation $R$ is sofic if and only if the full group $[R]$ embeds isometrically into an ultraproduct $\prod_{\mathcal{U}}\mathfrak{S}_{n_k}$.
\end{theorem}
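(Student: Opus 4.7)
The forward direction is immediate from Proposition~\ref{propositionequivalencesofdefinitionofsoficity}, whose implication $(1)\Rightarrow(2)$ supplies such an isometric embedding whenever $R$ is sofic. For the converse, suppose $\theta:[R]\to\prod_{\mathcal{U}}\mathfrak{S}_{n_k}$ is an isometric group embedding. Being a group homomorphism, $\theta(\operatorname{id}_X)=\operatorname{id}$; combined with $\operatorname{tr}(g)=1-d_\mu(g,\operatorname{id}_X)$ for $g\in[R]$, this forces $\theta$ to be trace-preserving. The plan is to construct an isometric companion $\phi:\operatorname{MAlg}(X,\mu)\to\prod_{\mathcal{U}}\operatorname{MAlg}(n_k)$ making $(\theta,\phi)$ a covariant pair, and then to conclude via Proposition~\ref{propositionequivalencesofdefinitionofsoficity}$(2)\Rightarrow(1)$.

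For each $A\in\operatorname{MAlg}(X,\mu)$, aperiodicity of $R$---in particular the measure-zero property of partial transversals in aperiodic pmp relations, which implies that $A$ meets a.e.\ $R$-class in $0$ or at least $2$ points---lets one pick $g_A,h_A\in[R]$ with $\operatorname{supp}(g_A)=A=\operatorname{Fix}(h_A)$. Set $\phi(A):=\operatorname{supp}(\theta(g_A))$; the preceding lemma, applied to $g_A$ and $h_A$, shows that this value is independent of the chosen $g_A$, while the same lemma applied to $g_{X\setminus A}$ and $g_A$ yields $\phi(X\setminus A)=\operatorname{Fix}(\theta(g_A))=\phi(A)^{c}$. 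Trace-preservation then gives $\mu_\#(\phi(A))=\mu(A)$.

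The crucial step is disjointness-preservation. Given disjoint $A,B$, pick $g_A,g_B$ themselves with disjoint supports; they commute in $[R]$ and $g_Ag_B$ has support $A\sqcup B$, so the product qualifies as $g_{A\cup B}$, giving $\phi(A\cup B)=\operatorname{supp}(\theta(g_A)\theta(g_B))$. Combining the pointwise inclusion $\operatorname{supp}(\sigma\tau)\subseteq\operatorname{supp}(\sigma)\cup\operatorname{supp}(\tau)$ in the ultraproduct with the subadditivity of $\mu_\#$, one obtains
\[\mu(A)+\mu(B)=\mu_\#(\phi(A\cup B))\leq\mu_\#(\phi(A)\cup\phi(B))\leq\mu_\#(\phi(A))+\mu_\#(\phi(B))=\mu(A)+\mu(B),\]
which must collapse to equalities throughout, forcing $\phi(A)\cap\phi(B)=\emptyset$ and $\phi(A\cup B)=\phi(A)\cup\phi(B)$. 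Together with complement-preservation, $\phi$ is a Boolean algebra homomorphism, and hence isometric. Covariance is immediate: $gg_Ag^{-1}$ has support $g(A)$, so $\phi(g(A))=\operatorname{supp}(\theta(gg_Ag^{-1}))=\theta(g)\cdot\phi(A)$.

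The one genuinely delicate point is producing $g_A$ (and $h_A$) for every $A$: this is precisely where aperiodicity is indispensable, since a singleton intersection of $A$ (or $X\setminus A$) with an $R$-class on a positive-measure set of classes would obstruct any fixed-point-free $R$-inner bijection of $A$. Once $\phi$ is built, Proposition~\ref{propositionequivalencesofdefinitionofsoficity}$(2)\Rightarrow(1)$ concludes that $R$ is sofic.
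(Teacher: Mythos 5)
Your proof is correct and follows the same overall strategy as the paper: both directions hinge on Proposition \ref{propositionequivalencesofdefinitionofsoficity}, and the converse is built by setting $\phi(A)=\operatorname{supp}(\theta(g_A))$ for some $g_A\in[R]$ with $\operatorname{supp}(g_A)=A$ (where aperiodicity enters), using the preceding lemma for well-definedness and complement-preservation, and then checking that $\phi$ is a covariant, measure-preserving Boolean embedding. The one step where you genuinely depart from the paper is disjointness-preservation, which is the technical heart of the argument: the paper works with explicit ultraproduct representatives $(g_k),(h_k),(r_k)$ and an approximate identity $h_k\approx r_k^{-1}$ on the overlap of supports to compute $d_\#(\theta(g),\theta(h))$ directly, whereas you observe that $g_Ag_B$ is a legitimate choice of $g_{A\sqcup B}$, combine $\operatorname{supp}(\sigma\tau)\subseteq\operatorname{supp}(\sigma)\cup\operatorname{supp}(\tau)$ with the measure identities $\mu_\#(\phi(C))=\mu(C)$ already established, and squeeze $\mu(A)+\mu(B)\leq\mu_\#(\phi(A)\cup\phi(B))\leq\mu(A)+\mu(B)$ to force $\phi(A)\cap\phi(B)=\varnothing$ and $\phi(A\cup B)=\phi(A)\cup\phi(B)$ simultaneously. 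Your version is shorter, avoids choosing representatives entirely, and gets finite-additivity of $\phi$ for free, at the cost of needing trace-preservation (hence $\mu_\#\circ\phi=\mu$) established \emph{before} disjointness rather than after; since that follows immediately from $\theta$ being an isometric group homomorphism, nothing is lost. The remaining steps (intersections from disjoint unions and complements, covariance via $\operatorname{supp}(gg_Ag^{-1})=g(A)$, isometry from the Boolean structure) match the paper, and your justification for the existence of $g_A$ via nullity of partial transversals is exactly the content of the lemma the paper cites.
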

\begin{proof}
Let $\theta:[R]\to\prod_{\mathcal{U}}\mathfrak{S}_{n_k}$ be an isometric embedding. We need to construct an embedding $\phi:\operatorname{MAlg}(X,\mu)\to\prod_{\mathcal{U}}\operatorname{MAlg}(n_k)$ for which the pair $(\phi,\theta)$ is covariant.

Given $A\in\operatorname{MAlg}(X,\mu)$, choose $g\in[R]$ with $\operatorname{supp}(g)=A$ (\cite{MR2583950}, Lemma 4.10). Consider a representative $\theta(g)=(g_k)_\mathcal{U}$, and define $\phi(A)=\operatorname{supp}(\theta(g))=(\operatorname{supp}(g_k))_\mathcal{U}$. We will show that $(\theta,\phi)$ satisfies the conditions of Proposition \ref{propositionequivalencesofdefinitionofsoficity}. We do this in steps, namely:
\begin{enumerate}
\item $\phi$ is well-defined, i.e., it does not depend on the choice of $g$ with $\operatorname{supp}(g)=A$;
\item $\phi$ preserves disjointness;
\item $\phi$ preserves intersections;
\item $\phi$ is covariant;
\item $\phi$ is isometric.
\end{enumerate}

\begin{enumerate}
\item\label{itemwelldefined} Suppose $g,h\in[R]$ with $\operatorname{supp}g=\operatorname{supp}h=A$. Consider any $r\in[R]$ with $\operatorname{supp}r=X\setminus A$. By the Lemma above, $\operatorname{supp}(\theta(g))=\operatorname{Fix}(\theta(r))=\operatorname{supp}(\theta(h))$.
\item\label{itempreservesdisjointness} Suppose $A\cap B=\varnothing$. Choose $g,h,r\in[R]$ with $\operatorname{supp}(g)=A$, $\operatorname{supp}(h)=B$, $\operatorname{supp}(r)=X\setminus(A\cup B)$. Here we consider representatives $\theta(g)=(g_k)_\mathcal{U}$, $\theta(h)=(h_k)_\mathcal{U}$, $\theta(r)=(r_k)_\mathcal{U}$. By the previous Lemma again, we can approximate, for $\mathcal{U}$-a.e.\ $k$, $\mu_\#(\operatorname{supp}(r_kh_k)\triangle\operatorname{Fix}(g_k))\sim 0$, so $h_k=r_k^{-1}$ in $\operatorname{supp}(g_k)\cap\operatorname{supp}(h_k)$ up to a set of measure $\sim 0$, and similarly $g_k=r_k^{-1}$ in $\operatorname{supp}(g_k)\cap\operatorname{supp}(h_k)$ up to a set of measure $\sim 0$. Thus
\begin{align*}
d_\#(g_k,h_k)&\sim\mu_\#(\operatorname{supp}(g_k)\triangle\operatorname{supp}(h_k))\\
&=\mu_\#(\operatorname{supp}(g_k))+\mu_\#(\operatorname{supp}(h_k))-2\mu_\#(\operatorname{supp}(g_k)\cap\operatorname{supp}(h_k))\\
&=d_\#(g_k,1)+d_\#(h_k,1)-2\mu_\#(\operatorname{supp}(g_k)\cap\operatorname{supp}(h_k).
\end{align*}
Taking the limit over $\mathcal{U}$, we have $d_\#(\theta(g),\theta(h))=d_\#(\theta(g),1)+d_\#(\theta(h),1)-2\mu_{\#}(\phi(A)\cap\phi(B))$, that is,
\[d_\mu(g,1)+d_\mu(h,1)=d_\mu(g,h)=d_\mu(g,1)+d_\mu(h,1)-2\mu_\#(\phi(A)\cap\phi(B)),\]
thus $\mu_\#(\phi(A)\cap\phi(B))=0$, which means that $\phi(A)\cap\phi(B)=\varnothing$.
\item Now, let's show that $\phi$ preserves intersections. Take $A,B\in\operatorname{MAlg}(X,\mu)$, and consider $g,h,k\in[R]$ with $\operatorname{supp}(g)=A\cap B$, $\operatorname{supp}(h)=A\setminus B$ and $\operatorname{supp}(k)=B\setminus A$. By item \ref{itempreservesdisjointness}, the supports of $\theta(g)$ and $\theta(h)$ are disjoint, so $\operatorname{supp}(\theta(gh))=\operatorname{supp}(\theta(g)\theta(h))=\operatorname{supp}(\theta(g))\cup\operatorname{supp}(\theta(h))$, and similarly for $g$ and $k$. Also, $\operatorname{supp}(gh)=A$, $\operatorname{supp}(gk)=B$, so by item \ref{itempreservesdisjointness} again,
\begin{align*}
\phi(A)\cap\phi(B)&=\operatorname{supp}(\theta(gh))\cap\operatorname{supp}(\theta(gk))\\
&=(\operatorname{supp}(\theta(g))\cup\operatorname{supp}(\theta(h)))\cap(\operatorname{supp}(\theta(g))\cup\operatorname{supp}(\theta(k)))\\
&=\operatorname{supp}(g)=\phi(A\cap B).
\end{align*}
\item To prove covariantness, let $A\in\operatorname{MAlg}(X,\mu)$ and $g\in[R]$. Take $h\in[R]$ with $\operatorname{supp}(h)=A$. Then $\operatorname{supp}(ghg^{-1})=g(A)$, and
\[\phi(g(A))=\operatorname{supp}(\theta(ghg^{-1}))=\operatorname{supp}(\theta(g)\theta(h)\theta(g)^{-1})=\theta(g)\cdot\operatorname{supp}(h)=\theta(g)\cdot\phi(A).\]
\item For the last property we simply need to show that $\phi$ preserves measure. Given $A=\operatorname{supp}(g)\in\operatorname{MAlg}(X,\mu)$, with $g\in[R]$, we have
\[\mu_\#(\phi(A))=\mu_\#(\operatorname{supp}(\theta(g)))=d_\#(\theta(g),1)=d_\mu(1,g)=\mu(\operatorname{supp}(g))=\mu(A).\qedhere\]
\end{enumerate}
\end{proof}

Now we extend this result to when $R$ has periodic points, but no singleton classes. Set $\operatorname{Per}_{\geq 2}(R)=\left\{x\in X:|R(x)|\geq 2\right\}$.

\begin{lemma}
There exists $\alpha\in [R]$ with $\operatorname{supp}\alpha=\operatorname{Per}_{\geq 2}(R)$.
\end{lemma}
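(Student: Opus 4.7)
I would proceed by partitioning $\operatorname{Per}_{\geq 2}(R)$ into Borel $R$-invariant pieces according to the cardinality of the $R$-classes and building a fixed-point-free element of the full group on each piece separately. For $n \in \{2, 3, \ldots, \infty\}$, set $Y_n = \{x \in X : |R(x)| = n\}$. Using a Feldman--Moore enumeration of $R$ by Borel involutions, the function $x \mapsto |R(x)|$ is Borel, so each $Y_n$ is Borel; it is also $R$-invariant, and together with $Y_1 := X \setminus \operatorname{Per}_{\geq 2}(R)$ the $Y_n$ partition $X$.

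On the aperiodic piece $Y_\infty$, the restriction $R|_{Y_\infty}$ is aperiodic, so (\cite{MR2583950}, Lemma 4.10), applied with $A = Y_\infty$, yields some $\alpha_\infty \in [R|_{Y_\infty}]$ with $\operatorname{supp}(\alpha_\infty) = Y_\infty$. On each finite piece $Y_n$ with $2 \leq n < \infty$, I would fix a Borel linear order $\prec$ on the standard Borel space $X$, enumerate each $R$-class inside $Y_n$ as $z_0 \prec z_1 \prec \cdots \prec z_{n-1}$, and define $\alpha_n$ on $Y_n$ by the cyclic shift $z_i \mapsto z_{(i+1)\bmod n}$. This is Borel because the ``$i$-th element in a finite $R$-class under $\prec$'' is a Borel function of $x$, and $\alpha_n$ has no fixed points on $Y_n$ because $n \geq 2$. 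Measure-preservation is automatic from $\mu$ being $R$-invariant, so $\alpha_n$ lies in $[R|_{Y_n}]$.

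Finally, since the $Y_n$ are pairwise disjoint and $R$-invariant, extending each $\alpha_n$ by the identity off $Y_n$ gives pairwise compatible maps in $[R]$, and the combined map $\alpha$, equal to $\alpha_n$ on $Y_n$ for every $2 \leq n \leq \infty$ and to the identity on $Y_1$, is a Borel measure-preserving bijection of $X$ with $(x,\alpha(x)) \in R$, i.e., $\alpha \in [R]$, with $\operatorname{supp}(\alpha) = \bigsqcup_{n \geq 2} Y_n = \operatorname{Per}_{\geq 2}(R)$. The one substantive input is the aperiodic case, which I delegate to the cited lemma; the remaining verifications (Borelness of the cyclic shifts and of $x \mapsto |R(x)|$) are routine Borel bookkeeping, so I do not expect a serious obstacle.
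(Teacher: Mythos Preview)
Your argument is correct and, on the periodic pieces, is essentially the paper's proof in different clothing: the paper invokes a Borel transversal for periodic relations, while you invoke a Borel linear order on $X$; but a Borel linear order immediately yields a transversal (take the $\prec$-minimum of each finite class), and conversely a transversal lets one enumerate each finite class in a Borel way, so the two devices are interchangeable here and both lead to the same cyclic shift on each $Y_n$.

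The one point of divergence is that you also treat the aperiodic piece $Y_\infty$ separately via the lemma already cited in the paper for the aperiodic theorem, whereas the paper's one-line proof only appeals to the periodic transversal. This is consistent with reading $\operatorname{Per}_{\geq 2}(R)$ as $\{x:2\le |R(x)|<\infty\}$, which also fits the subsequent proof where $\operatorname{Aper}$ is taken to be $X\setminus P$; under the literal definition $\{x:|R(x)|\ge 2\}$ your extra step is needed and the paper's sketch would be incomplete. In either reading your proof is at least as complete as the paper's, at the cost of citing one more (already-used) lemma.
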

\begin{proof}
This follows easily from the existence of a transversal for periodic relations (\cite{kechrisclassicaldescriptivesettheory}, Theorem 12.16).\qedhere
\end{proof}

\begin{lemma}
Let $\theta:[R]\to\prod_{\mathcal{U}}\mathfrak{S}_{n_k}$ be an isometric embedding and $g,h\in [R]$. Then $\operatorname{supp}g\cap\operatorname{supp}h=\varnothing$ if and only if $\operatorname{supp}\theta(g) \cap\operatorname{supp}\theta(h)=\varnothing$.
\end{lemma}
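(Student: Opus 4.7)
The plan is to characterize disjointness of supports purely by a metric condition—saturation of the triangle inequality $d(g,h)\le d(g,1)+d(h,1)$—and then transfer it through the isometry.

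For $g,h\in[R]$, set $S:=\operatorname{supp}(g)\cap\operatorname{supp}(h)$. Decomposing $\{x:gx\ne hx\}$ as the disjoint union of $\operatorname{supp}(g)\triangle\operatorname{supp}(h)$ (on which disagreement is forced, contributing $d_\mu(g,1)+d_\mu(h,1)-2\mu(S)$) and $\{x\in S:gx\ne hx\}$ (contributing between $0$ and $\mu(S)$) yields
\[\mu(S)\le d_\mu(g,1)+d_\mu(h,1)-d_\mu(g,h)\le 2\mu(S).\]
In particular, $\mu(S)=0$ iff $d_\mu(g,h)=d_\mu(g,1)+d_\mu(h,1)$.

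Exactly the same arithmetic holds in each finite symmetric group $\mathfrak{S}_{n_k}$. Choosing representatives $\theta(g)=(g_k)_{\mathcal{U}}$ and $\theta(h)=(h_k)_{\mathcal{U}}$ and passing to ultralimits produces the analogous two-sided bound in $\prod_{\mathcal{U}}\mathfrak{S}_{n_k}$, with $\mu_\#(\operatorname{supp}\theta(g)\cap\operatorname{supp}\theta(h))$—the ultralimit of the pointwise intersection measures, by the definition of the support map recalled in the preliminaries—replacing $\mu(S)$. Hence $\operatorname{supp}\theta(g)\cap\operatorname{supp}\theta(h)=\varnothing$ iff $d_\#(\theta(g),\theta(h))=d_\#(\theta(g),1)+d_\#(\theta(h),1)$. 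Since $\theta$ is isometric and sends $1$ to $1$ (as noted after Ozawa's definition), these metric equalities in $[R]$ and in $\prod_{\mathcal{U}}\mathfrak{S}_{n_k}$ coincide, yielding the lemma. No genuine obstacle arises; the argument is essentially arithmetic and the ultraproduct step is routine.
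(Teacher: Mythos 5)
Your proof is correct and follows the same route as the paper: the paper's one-line argument is precisely that $\operatorname{supp}g\cap\operatorname{supp}h=\varnothing$ if and only if $d_\mu(g,h)=d_\mu(1,g)+d_\mu(1,h)$, a condition preserved by the isometry $\theta$. You have merely written out the elementary two-sided estimate and the ultralimit step that the paper leaves implicit.
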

\begin{proof}
$\operatorname{supp}g\cap\operatorname{supp}h=\varnothing$ if and only if $d_\mu(g,h)=d_\mu(1,g)+d_\mu(1,h)$, and this condition is preserved by $\theta$.\qedhere
\end{proof}

\begin{theorem}
Suppose $R$ does not contain singleton classes. Then $R$ is sofic if and only if $[R]$ is metrically sofic.
\end{theorem}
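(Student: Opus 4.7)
One direction is immediate: a sofic embedding $[[R]]\hookrightarrow\prod_\mathcal{U}[[n_k]]$ restricts to an isometric embedding of $[R]$ into the subgroup $\prod_\mathcal{U}\mathfrak{S}_{n_k}$ of elements with full domain.

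For the converse, the strategy is to reduce to the aperiodic case. Decompose $X = X_\infty\sqcup X_{<\infty}$ into the $R$-invariant Borel sets consisting, respectively, of points with infinite and finite $R$-class. Since $R$ has no singleton classes, every class in $X_{<\infty}$ has size at least $2$, so $R|_{X_{<\infty}}$ is hyperfinite and therefore sofic by Corollary \ref{periodicequivalencerelationsaresofic}. By Proposition \ref{propositionsoficityforsubsets}(b) it then suffices to show $R|_{X_\infty}$ is sofic, and since this relation is aperiodic the previous theorem reduces this to producing an isometric embedding $\theta': [R|_{X_\infty}]\to\prod_\mathcal{U}\mathfrak{S}_{m_k}$ for some sequence $(m_k)$. (The edge cases $\mu(X_\infty)=0$ or $\mu(X_{<\infty})=0$ are handled trivially.)

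The heart of the argument is a cutting-down of the given isometric embedding $\theta:[R]\to\prod_\mathcal{U}\mathfrak{S}_{n_k}$. Applying the first of the two preceding lemmas to the relation $R|_{X_{<\infty}}$ (which has no singleton classes) produces $\beta\in[R]$ with $\operatorname{supp}\beta = X_{<\infty}$ (extended by the identity on $X_\infty$). Any $g\in[R|_{X_\infty}]$, viewed in $[R]$ via extension by the identity on $X_{<\infty}$, then satisfies $\operatorname{supp}(g)\cap\operatorname{supp}(\beta)=\varnothing$; by the second preceding lemma this gives $\operatorname{supp}\theta(g)\cap\operatorname{supp}\theta(\beta)=\varnothing$, i.e., $\theta(g)$ pointwise fixes $\operatorname{supp}\theta(\beta)$. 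Fix a representative $\theta(\beta)=(\beta_k)_\mathcal{U}$ and set $F_k=\operatorname{supp}\beta_k$ and $m_k=n_k-|F_k|$; since $\theta$ is isometric, $m_k/n_k\to\mu(X_\infty)$ along $\mathcal{U}$. The subgroup of $\prod_\mathcal{U}\mathfrak{S}_{n_k}$ consisting of those elements that pointwise fix $(F_k)_\mathcal{U}$ is naturally identified, by restriction to $[n_k]\setminus F_k$, with $\prod_\mathcal{U}\mathfrak{S}_{m_k}$, and this identification rescales the normalized Hamming distance by a factor of $n_k/m_k$ in the limit. Composing $\theta$ with this identification defines $\theta':[R|_{X_\infty}]\to\prod_\mathcal{U}\mathfrak{S}_{m_k}$ with
\[ d_\#(\theta'(g),\theta'(h)) = \tfrac{1}{\mu(X_\infty)}\, d_\mu(g,h) = d_{\mu_{X_\infty}}(g,h), \]
so $\theta'$ is an isometric embedding, completing the reduction.

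The main technical obstacle is formalizing the identification in the ultraproduct of the stabilizer of $(F_k)_\mathcal{U}$ with $\prod_\mathcal{U}\mathfrak{S}_{m_k}$: concrete coordinate representatives of $\theta(g)$ only fix $F_k$ approximately, so one has to modify them on sets of vanishing relative size (which does not change the class in the ultraproduct) to get honest permutations of $[n_k]\setminus F_k$ on which the restriction map is literally defined; once this bookkeeping is handled, multiplicativity of $\theta'$ and the metric computation above fall out routinely.
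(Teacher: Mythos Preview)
Your proposal is correct and follows essentially the same route as the paper: split off the periodic part (which is hyperfinite, hence sofic), then use an element $\beta$ (the paper's $\alpha$) with support equal to the periodic part together with the disjoint-support lemma to cut down the given embedding $\theta$ to an embedding of $[R|_{X_\infty}]$ into $\prod_\mathcal{U}\mathfrak{S}_{[n_k]\setminus F_k}$, and finally invoke the aperiodic theorem. The only cosmetic difference is that you verify the resulting map is isometric via the metric rescaling $d_{\mu_{X_\infty}}=\tfrac{1}{\mu(X_\infty)}d_\mu$, whereas the paper verifies it is trace-preserving; these are equivalent checks.
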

\begin{proof}
Let $P=\operatorname{Per}_{\geq 2}(R)$ and $\operatorname{Aper}=X\setminus P$. By previous results, it is sufficient to show that $[R|_{\operatorname{Aper}}]$ is metrically sofic. Fix any $\alpha\in[R]$ with $\operatorname{supp}\alpha=P$. Let $\theta:[R]\to\prod_{\mathcal{U}}\mathfrak{S}_{n_k}$ be a tracial embedding. Let $(A_k)_\mathcal{U}=\operatorname{supp}\theta(\alpha)$.

For each $f\in[R|_{\operatorname{Aper}}]$, let $\widetilde{f}=f\lor 1_P$ be the natural extension of $f$ to $X$. By the previous Lemma, $\operatorname{supp}\theta(\widetilde{f})\cap\operatorname{supp}(\theta(\alpha))$ for all $f\in[R|_{\operatorname{Aper}}]$, so we can find a representative $\theta(\widetilde{f})=(\theta_k(f))_\mathcal{U}$ such that $\operatorname{supp}\theta_k(f)\cap A_k=\varnothing$ for all $n$, that is, $\operatorname{supp}\theta_k(f)\subseteq[n_k]\setminus A_k$.

Define $\eta:[R|_{\operatorname{Aper}}]\to\prod_{\mathcal{U}} \mathfrak{S}_{[n_k]\setminus A_k}$ by $\eta(f)=(\theta_k(f)|_{[n_k]\setminus A_k})_\mathcal{U}$. It is easy enough to see that this map is multiplicative, so it remains only to check that it is tracial. Given $f\in[R|_{\operatorname{Aper}}]$, one readily checks that
\[\operatorname{tr}f=(\operatorname{tr}\widetilde{f}-\mu(P))\mu(\operatorname{Aper}),\]
and similarly,
\[\operatorname{tr}_{[n_k]\setminus A_k}\theta_k(f)|_{[n_k]\setminus A_k}=\left(\operatorname{tr}\theta_k(f)-\frac{\# A_k}{n_k}\right)\left(1-\frac{\#A_k}{n_k}\right).\]
Now $\operatorname{tr}\theta_k(f)$ converges (along $\mathcal{U}$) to $\operatorname{tr}\widetilde{f}$, and $\#A_k/n_k$ converges to $\mu(\operatorname{supp}\alpha)=\mu(P)=1-\mu(\operatorname{Aper})$. Therefore $\eta$ is tracial.\qedhere
\end{proof}

\bibliographystyle{amsplain}
\bibliography{biblio}

\end{document}